\numberwithin{equation}{section}
\newcommand\de{\delta}
\newcommand\lam{\lambda}
\renewcommand\phi{\varphi}
\def\d{\mathbb{D}}
\def\c{\mathbb{C}}
\newcommand\eps{\varepsilon}
\newcommand\g{\mathcal{G}}
\def\r{\mathbb{R}}
\newcommand\ov{\overline}
\newcommand\half{\tfrac 12}
\def\hol{{\rm Hol}}
\newcommand\inv{^{-1}}
\newcommand\bbm{\begin{bmatrix}}
\newcommand\ebm{\end{bmatrix}}
\newcommand\bpm{\begin{pmatrix}}
\newcommand\epm{\end{pmatrix}}
\def\h{\mathcal{H}}
\def\b{\mathcal{B}}
\def\m{\mathcal{M}}
\def\n{\mathcal{N}}
\def\ph{\phi}
\newcommand\ga{\gamma}
\def\f{\mathcal{F}}
\newcommand\calk{\mathcal{K}}
\newcommand\df{\stackrel{\rm def}{=}}
\newcommand\ess{\mathcal{S}}
\newcommand\x{\mathcal{X}}
\newcommand\y{\mathcal{Y}}
\def\be{\begin{equation}}
\def\ee{\end{equation}}
\def\hol{{\rm Hol}}
\def\hinf{{\rm H}^\infty}
\def\ess{\mathcal{S}}
\def\essdp{\mathcal{S}_{\mathrm {dp}}}
\def\fdp{\mathcal{F}_{\rm dp}}
\def\gdp{\mathcal{G}_{\rm dp}}
\def\ip#1#2{\langle\, #1\, ,#2\, \rangle}
\def\Bigip#1#2{\Big\langle\, #1\, ,#2\, \Big\rangle}
\def\b{\mathcal{B}}
\def\hol{{\rm Hol}}
\def\hinf{{\rm H}^\infty}
\def\set#1#2{\{ #1 \, : \, #2\}}
\def\norm#1{\| #1 \|}
\def\ip#1#2{\langle #1,#2\rangle}
\def\s0{s_0}
\def\p0{p_0}
\DeclareMathOperator{\ran}{ran}
\DeclareMathOperator{\rank}{rank}
\theoremstyle{definition}
\newtheorem{defin}[equation]{Definition}
\newtheorem{lem}[equation]{Lemma}
\newtheorem{prop}[equation]{Proposition}
\newtheorem{rem}[equation]{Remark}
\newtheorem{thm}[equation]{Theorem}
\newtheorem{prob}[equation]{Problem}
\newtheorem{ex}[equation]{Example}
\newcommand\black{\color{black}}
\begin{document} 
\title[Function theory on the annulus  in the dp-norm]{Function theory on the annulus in the dp-norm}

\author{Jim Agler}
\address{Department of Mathematics, University of California at San Diego, CA \textup{92103}, USA}
\email{jagler@ucsd.edu}

\author{Zinaida A. Lykova}
\address{School of Mathematics,  Statistics and Physics, Newcastle University, Newcastle upon Tyne
	NE\textup{1} \textup{7}RU, U.K.}
\email{Zinaida.Lykova@ncl.ac.uk}

\author{N. J. Young}
\address{School of Mathematics, Statistics and Physics, Newcastle University, Newcastle upon Tyne NE1 7RU, U.K.}
\email{Nicholas.Young@ncl.ac.uk}

\date{28th September 2025}

\keywords{Holomorphic functions; Hilbert space model; annulus; Crouzeix conjecture; Douglas-Paulsen family}

\dedicatory{ In Memory of Rien Kaashoek}

\subjclass{ 47B99, 30E05, 32A26}


\thanks{Partially supported by National Science Foundation Grants
	DMS 1361720 and 1665260, the Engineering and Physical Sciences Research Council grant EP/N03242X/1.}

\begin{abstract} 
In this paper we shall use realization theory, a favourite technique of Rien Kaashoek, to prove new results about a class of holomorphic functions on an annulus
 \[
R_\delta \df \{z\in\c: \delta <|z|<1\},
\]
where $0<\delta<1$.
The class of functions in question arises in the early work of R. G. Douglas and V. I. Paulsen on the rational dilation of a Hilbert space operator $T$ to a normal operator with spectrum in $\partial R_\delta$.
Their work suggested the following norm $\|\cdot\|_{\mathrm{dp}}$ on the space 
$\mathrm{Hol}(R_\delta)$ of holomorphic functions on $R_\delta$, 
\[
\|\ph\|_{\mathrm{dp}} \stackrel{\rm def}{=}  \sup\{ \|\ph(T)\|:  \|T\|\leq 1,  \|T\inv\|\leq 1/\delta \ \text{and} \   \sigma(T)\subseteq R_\delta\}.
\]
By analogy with the classical Schur class of holomorphic functions  $\mathcal{S} $ with supremum norm at most $1$ on the disc $\d$, it is natural to consider the {\em dp-Schur class} $\mathcal{S}_\mathrm{dp}$ of holomorphic functions of dp-norm at most $1$ on $R_\delta$.

Our central result  is a Pick interpolation theorem for functions in $\mathcal{S}_\mathrm{dp}$ that is analogous to Abrahamse's Interpolation Theorem for bounded holomorphic functions on a multiply-connected domain.
For a tuple $\lambda=(\lambda_1,\dots,\lambda_n)$ of distinct interpolation nodes in $R_\delta$, we introduce a special set $\g_\mathrm{dp}(\lambda)$ of positive definite $n\times n$ matrices, which we call {\em DP Szeg\H{o} kernels}.
The DP Pick problem $\lam_j \mapsto z_j, j=1,\dots,n$, is shown to be solvable if and only if,
\[ 
[(1-\overline z_i z_j)g_{ij}] \ge 0 \; \text{ for all}\;
g \in \mathcal{G}_{\mathrm {dp}} (\lambda).
\]
We prove further that a solvable DP Pick problem has a solution which is a rational function with a finite-dimensional model, an intriguing result which opens up the possibility of a theory of extremal functions from $\mathcal{S}_\mathrm{dp}$ analogous to the theory of finite Blaschke products.
\end{abstract} 

\maketitle
\tableofcontents

\section{Introduction}

It is our honour to contribute to this memorial issue for Marinus Kaashoek, who was a prolific and influential operator theorist throughout a long career.
A constant thread in his research over several decades was the power of realization theory applied to a wide variety of problems in analysis.   Among his many contributions in this area we mention his monograph \cite{bgk}, written with his longstanding collaborators Israel Gohberg and Harm Bart, which was an early and influential work in the area, and his more recent papers and book, including \cite{KaavS2014,FHK2014, KaaVL}.  Realization theory uses explicit formulae for functions in terms of operators on Hilbert space to prove function-theoretic results. 
In this paper we continue along the Bart-Gohberg-Kaashoek path by exploiting realization theory to prove new results about a class of holomorphic functions which was first encountered by R. G. Douglas and V. I. Paulsen in a study of rational dilation on the annulus.

For any open set  $\Omega$ in the plane, $\hol(\Omega)$ will denote the set of holomorphic functions  on $\Omega$ and $\hinf(\Omega)$ will denote the Banach algebra of bounded holomorphic functions on $\Omega$, equipped with the supremum norm $\norm{\phi}_{\hinf(\Omega)}= \sup_{z\in\Omega}|\phi(z)|$.   Let $\ess(\Omega)$ denote the class $ \{ \phi \in \hinf(\Omega): \|\phi\|_{\hinf(\Omega)} \leq 1 \}$.
The classical Schur class, $\ess$, is the set $\ess(\d)$.

We recall the extensively-studied Pick interpolation theorem \cite{pick} for bounded holomorphic functions on the open unit disc $\d$.  
\begin{thm} \label{pick-intro} Let  $\lam_1,\dots,\lam_n \in \d$ be distinct  and let $z_1,\dots,z_n\in\c$.
There exists $ \phi \in \ess$ such that
\[\phi(\lambda_j) = z_j \qquad \text{for} \ j=1,\ldots,n, \]
if and only if,
\[
 \bbm \displaystyle \frac{1-\ov {z_i} z_j}{1-\ov{\lam_i}\lam_j} \ebm_{i,j=1}^n \geq 0.
\]
\end{thm}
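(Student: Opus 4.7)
The plan is to follow the realization-theoretic, \emph{lurking isometry} approach to the classical Pick theorem, since that is the technique the paper advertises and extends. The necessity direction is the easier half. If $\ph \in \ess$ interpolates $\ph(\lam_j)=z_j$, then $M_\ph$ is a contraction on the Hardy space $H^2(\d)$, whose reproducing kernel is the Szeg\H{o} kernel $k(z,w)=1/(1-\ov{w}z)$. Consequently $(1-\ov{\ph(w)}\ph(z))/(1-\ov{w}z)$ is a positive definite kernel on $\d$, and evaluating at the nodes $\lam_1,\dots,\lam_n$ produces precisely the stated Pick matrix, which must therefore be positive semidefinite.

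For the substantive sufficiency direction, assume the Pick matrix $[g_{ij}]$ with $g_{ij}=(1-\ov{z_i}z_j)/(1-\ov{\lam_i}\lam_j)$ is positive semidefinite. Rewriting the defining identity in the Hermitian form
\[
1 + \ov{\lam_i}\lam_j g_{ij} \;=\; g_{ij} + \ov{z_i}z_j,
\]
I would realise $[g_{ij}]$ as the Gram matrix of vectors $y_1,\dots,y_n$ in a finite-dimensional Hilbert space $\calh$; the identity then equates two Gram matrices and certifies that the assignment
\[
V:\ \bbm 1 \\ \lam_j y_j \ebm \ \longmapsto \ \bbm z_j \\ y_j \ebm, \qquad 1 \le j \le n,
\]
extends by linearity to a well-defined isometry on the span of its domain inside $\c \oplus \calh$.

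I would then extend $V$ to a unitary $U = \bbm A & B \\ C & D \ebm$ on $\c \oplus \calk$ for a Hilbert space $\calk \supseteq \calh$ (a Julia-type dilation absorbs any defect spaces) and define
\[
\ph(z) \df A + Bz(1-Dz)\inv C.
\]
Since $U$ is a contraction, the standard transfer-function argument gives $\ph \in \ess$, and unpacking $U\bbm 1 \\ \lam_j y_j\ebm = \bbm z_j \\ y_j \ebm$ forces $(1-D\lam_j)y_j = C$ together with $A + B\lam_j y_j = z_j$, hence $\ph(\lam_j)=z_j$ as required. The main structural obstacle is not the existence of the unitary extension, which is automatic in finite dimensions, but the clean identification of the Hermitian rearrangement of the Pick condition with a lurking-isometry identity; once that is in hand, the remainder is book-keeping. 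This is precisely the scheme the paper will generalise, with the single Szeg\H{o} kernel replaced by the family $\gdp(\lam)$ of DP Szeg\H{o} kernels, to establish its dp-Schur Pick theorem.
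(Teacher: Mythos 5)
Your proof is correct, and it uses precisely the lurking-isometry/realization scheme that the paper itself deploys for its dp-analogues (Theorems \ref{thm3.8} and \ref{extr-finite}); the paper states the classical Pick theorem without proof, citing \cite{pick}, so your argument is exactly the one-kernel specialization of the paper's own method. Both halves check out: the necessity via contractivity of multiplication on $H^2$, and the sufficiency via the Gram-matrix rearrangement $1+\ov{\lam_i}\lam_j g_{ij}=g_{ij}+\ov{z_i}z_j$, the isometry $V$, its unitary extension (indeed automatic in finite dimensions since the domain and range spans have equal dimension), and the transfer-function evaluation at the nodes.
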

Pick interpolation problems, with the unit disc replaced by other domains $\Omega$ in the plane, have also been much studied.
 In the event that $\Omega$ is a simply connected proper open subset of $\c$, with the aid of the conformal map $F:\Omega \to \d$, we can convert this problem into a classical Pick problem on $\d$ with interpolation data $F(\lam_j) \mapsto w_j$ for $ j=1,\ldots,n,$ and then Pick's theorem gives a criterion for the existence of $\phi$ in terms of the positivity of the appropriate ``Pick matrix", which here is
\[
 \bbm \displaystyle \frac{1-\ov {w_i} w_j}{1-\ov{F(\lam_i)}F(\lam_j)} \ebm_{i,j=1}^n \geq 0.
\]

 More generally, the Pick problem on a multiply connected domain was studied in the 1940s by Garabedian \cite{gar} and Heins \cite{heins}. Later, Sarason \cite{sar2} and Abrahamse \cite{Abr1977} treated the problem  in terms of reproducing kernels, an approach that we follow in this paper.  Abrahamse's Theorem gives a solution to the Pick interpolation problem on any bounded domain  $\Omega$  in the plane whose boundary consists of finitely many disjoint analytic Jordan curves. He showed that a Pick problem  on $\Omega$  can be solved if and only if an infinite collection of Pick matrices are positive semi-definite.
In the case of the annulus $ R_\de=\{z\in\c:\de < |z| < 1\}$, for a tuple $\lam=(\lam_1,\dots,\lam_n)$ of distinct interpolation nodes in $ R_\de$,
Abrahamse \cite{Abr1977} described a family $\g(\lam)$ of positive definite   $n \times n$ matrices
for which the following statement is true: \black
\begin{thm}\label{Abrahamse}  Let  $\lam_1,\dots,\lam_n \in R_\de$ be distinct  and let $z_1,\dots,z_n\in\c$.
There exists $ \phi \in \ess(R_\de)$ such that
\[\phi(\lambda_j) = z_j \qquad \text{for} \ j=1,\ldots,n, \]
if and only if, for each $g\in \g(\lam)$,
\[
 \bbm (1-\ov{z_i}z_j) g_{ij} \ebm_{i,j=1}^n \ge 0.
\]
\end{thm}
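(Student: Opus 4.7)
The plan is to prove both directions via reproducing kernel Hilbert space methods, following the framework of Sarason and Abrahamse. First, one must make the family $\g(\lam)$ concrete. Since the fundamental group of $R_\de$ is $\mathbb{Z}$, its unitary characters are parametrized by $\al \in \t$. For each such $\al$ one forms the Hardy-type space $H^2_\al$ of holomorphic sections of the flat line bundle over $R_\de$ with monodromy $\al$ (equivalently, $\al$-automorphic holomorphic functions on the universal cover that are $L^2$ on lifts of the boundary circles). Each $H^2_\al$ is a reproducing kernel Hilbert space with kernel $k_\al(\cdot,\cdot)$, and the family $\g(\lam)$ is declared to be the collection of $n \times n$ positive definite matrices $[k_\al(\lam_i,\lam_j)]$ as $\al$ ranges over $\t$.

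For the necessity direction, suppose $\phi \in \ess(R_\de)$ interpolates the data. Being single-valued and bounded by $1$ on $R_\de$, multiplication by $\phi$ preserves every $H^2_\al$ and is contractive there. The standard reproducing-kernel identity
\[
M_\phi^* k_\al(\cdot,\lam_i) \;=\; \ov{\phi(\lam_i)}\, k_\al(\cdot,\lam_i) \;=\; \ov{z_i}\, k_\al(\cdot,\lam_i),
\]
together with $\|M_\phi^*\|\le 1$ applied to an arbitrary linear combination of the kernels $k_\al(\cdot,\lam_j)$, yields
\[
\bbm (1-\ov{z_i}z_j)k_\al(\lam_i,\lam_j)\ebm_{i,j=1}^n \ge 0,
\]
which must hold for every $\al \in \t$ and hence for every $g \in \g(\lam)$.

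For sufficiency I would adopt a duality and commutant-lifting strategy. Let $\m \subset \hinf(R_\de)$ be the ideal of functions vanishing on $\{\lam_1,\dots,\lam_n\}$; then the quotient $\hinf(R_\de)/\m$ is $n$-dimensional, and the interpolation problem admits a solution in $\ess(R_\de)$ precisely when the coset of data has quotient norm at most $1$. Following Sarason, one represents the quotient norm as the supremum, over $\al\in\t$, of the operator norm of the compression of multiplication by $z$ to the finite-dimensional model space $\n_\al \df H^2_\al \ominus \m \cdot H^2_\al$, which is spanned by the kernels $k_\al(\cdot,\lam_j)$. On each $\n_\al$, the classical Nevanlinna-Pick/commutant-lifting theorem converts positivity of the Pick matrix $[(1-\ov{z_i}z_j)k_\al(\lam_i,\lam_j)]$ into existence of a contractive interpolant. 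A weak-$*$ compactness argument, combined with a Cauchy-integral or direct-integral assembly over $\al \in \t$, then produces from the fibrewise solutions a single function $\phi \in \ess(R_\de)$ realizing the prescribed values.

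The principal obstacle is the sufficiency step, specifically the coherent assembly of solutions across the characters. Since the line bundles for distinct $\al$ are inequivalent, one cannot simply average interpolants; instead one must combine the fibrewise information using a weak-$*$ compactness argument on $\hinf(R_\de)$ together with a separating-functional analysis of the sort carried out in Abrahamse's original paper. Given the realization-theoretic emphasis of the present paper, I would additionally expect a more constructive route that produces $\phi$ explicitly as a Bart-Gohberg-Kaashoek transfer function on an appropriate vector bundle over $R_\de$.
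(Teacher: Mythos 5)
You should first note that the paper does not prove Theorem \ref{Abrahamse} at all: it is quoted as background from Abrahamse's 1979 paper, so there is no internal proof to compare against. Judged on its own merits, your necessity argument is correct and standard: once $\g(\lam)$ is identified with the localizations of the reproducing kernels $k_\al$ of the bundle Hardy spaces $H^2_\al$, $\al\in\t$, the identity $M_\phi^*k_\al(\cdot,\lam_i)=\ov{z_i}\,k_\al(\cdot,\lam_i)$ and contractivity of $M_\phi$ on each $H^2_\al$ give the positivity of every Pick matrix. The reduction of solvability to the quotient norm of the coset in $\hinf(R_\de)/\m$ being at most $1$ is also fine (a normal-families argument shows the infimum over interpolants is attained).

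The sufficiency direction, however, has a genuine gap, and it sits exactly where you locate the ``principal obstacle.'' Two problems. First, the assertion that on each model space $\n_\al$ ``the classical Nevanlinna--Pick/commutant-lifting theorem converts positivity of the Pick matrix $[(1-\ov{z_i}z_j)k_\al(\lam_i,\lam_j)]$ into existence of a contractive interpolant'' is false: the kernels $k_\al$ on $R_\de$ are not complete Pick kernels, and positivity of the Pick matrix for a \emph{single} $\al$ does not produce a contractive multiplier of $H^2_\al$ doing the interpolation --- if it did, one character would suffice and the whole family $\g(\lam)$ would be redundant. The entire content of the theorem is that the conditions for \emph{all} $\al$ jointly imply solvability; there are no fibrewise solutions to assemble. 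Second, the identification of the quotient norm with $\sup_\al$ of the norms of the compressed operators is precisely the distance formula that \emph{is} Abrahamse's theorem; stating it as a representation ``following Sarason'' assumes the conclusion. The workable route --- and the one the present paper itself uses for the dp-analogue in Theorem \ref{iff-dp-kernel-z} --- is a cone-separation argument: form the cone of Hermitian matrices expressible through the admissible kernel structure, suppose the data matrix $[1-\ov{z_i}z_j]$ (Schur-multiplied appropriately) lies outside it, separate by a real-linear functional $\call\ge 0$ on the cone, use $\call$ to define a semi-inner product on $\c^n$ on which $Te_j=\lam_je_j$ becomes (after quotienting by the null space) the restriction of a subnormal operator with normal spectrum in $\partial R_\de$; the Abrahamse--Douglas classification of such operators as bundle shifts then produces a specific $\al\in\t$ whose Pick matrix fails to be positive semi-definite, contradicting the hypothesis. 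Without this (or an equivalent) mechanism for extracting a single violating kernel from non-solvability, your sketch does not close.
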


An alternative  explicit choice of $\g(\lam)$ for which Theorem \ref{Abrahamse} is true  is described in \cite{sar2, Ag2} as follows
$$\g(\lam)= \{ \bbm  g_\rho(\lam_i,\lam_j)\ebm_{i,j=1}^n: \rho > 0\},$$
where
\[
g_\rho(\lam_i,\lam_j) =  \sum_{m=-\infty}^\infty \frac {(\overline\lam_i \lam_j)^m}{\rho + \de^{2m}},\quad \text{for}\ 1\leq i,j \leq n.
\]

Another natural variant of Pick's problem arises if one replaces the supremum norm on $\hol(\Omega)$ by a different norm. For example, consider the {\em Dirichlet space} $\mathcal{D}$ of holomorphic functions $f$ on $\d$ such that $f'$ is square integrable with respect to area measure on $\d$, with pointwise operations and the norm
\[
\|f\|_\mathcal{D}^2 = \sum_{n=0}^\infty (n+1) |\hat{f}(n)|^2 = \|f\|_{H^2}^2 + \int_\d|f'(z)|^2 dm(z),
\]
where $m$ denotes area measure on the disc.
The Dirichlet space is a Hilbert function space on $\d$ with reproducing kernel
\[
k_{\mathcal{D}}(\lam,\mu)= -\frac{1}{\overline\mu \lam}\log(1-\overline\mu\lam).
\]
The Pick-type interpolation problem appropriate to this Hilbert function space is expressed in terms of its {\em multiplier space} $\m(\mathcal{D})$, which is defined to be the space of functions $\phi$ on $\d$ such that $\phi f \in\mathcal{D}$ for every $f\in\mathcal{D}$, with pointwise operations and the {\em multiplier norm} 
\[
\|\ph\|_{\m(\mathcal{D})} = \sup \{\|\ph f\|_\mathcal{D}: f\in\mathcal{D}, \|f\|_{\mathcal{D}} \leq 1\}.
\]
In this setting the corresponding Pick interpolation theorem is the following \cite[Corollary 7.41]{AgMcC2002}: 
\begin{thm}\label{Dirichlet}  Let  $\lam_1,\dots,\lam_n \in \d$ be distinct  and let $z_1,\dots,z_n\in\c$.
There exists $\ph\in\mathcal{M}(\mathcal{D})$ such that $\|\ph\|_{\mathcal{M}(\mathcal{D})} \leq 1$ and 
\[\phi(\lambda_j) = z_j \qquad \text{for} \ j=1,\ldots,n, \]
if and only if
\[
 \bbm (1- z_i \ov{z_j}) k_\mathcal{D}(\lam_i,\lam_j) \ebm_{i,j=1}^n \ge 0.
\]
\end{thm}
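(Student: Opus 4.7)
The proof would use the reproducing kernel structure of $\cald$ together with the general theory of complete Nevanlinna--Pick kernels.

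For necessity I would use the standard observation that for $\ph\in\m(\cald)$, the kernel functions are eigenvectors of the adjoint multiplier: $M_\ph^* k_\cald(\cdot,\lam_j)=\ov{\ph(\lam_j)}\,k_\cald(\cdot,\lam_j)$ for each $j$. Assuming $\ph(\lam_j)=z_j$ and $\norm{\ph}_{\m(\cald)}\le 1$, the operator $M_\ph^*$ is a contraction, so its restriction to the span of $\{k_\cald(\cdot,\lam_1),\dots,k_\cald(\cdot,\lam_n)\}$ has norm at most one. Applying the contraction inequality to a general linear combination $\sum c_i k_\cald(\cdot,\lam_i)$ and computing inner products via the reproducing property yields precisely the positivity of $[(1-z_i\ov{z_j})k_\cald(\lam_i,\lam_j)]_{i,j=1}^n$.

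For sufficiency I would appeal to the generalized Pick interpolation theorem of McCullough, Quiggin, and Agler--McCarthy for multiplier algebras of complete Nevanlinna--Pick (CNP) reproducing kernel Hilbert spaces. In that theorem, for a Hilbert function space $H$ on a set $X$ whose kernel $k$ is CNP, a Pick problem $\lam_j\mapsto z_j$ is solvable in the closed unit ball of $\m(H)$ if and only if the single matrix $[(1-z_i\ov{z_j})k(\lam_i,\lam_j)]$ is positive semi-definite. The desired conclusion for $\cald$ is then an immediate specialization, provided one knows that $k_\cald$ is a CNP kernel.

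The main obstacle is therefore to verify that $k_\cald$ is a CNP kernel, i.e.\ that $1-1/k_\cald(\lam,\mu)$ is a positive semi-definite kernel on $\d\times\d$. I would approach this via formal power series: starting from
\[
k_\cald(\lam,\mu) \;=\; \sum_{n=0}^\infty \frac{(\ov\mu\lam)^n}{n+1},
\]
one inverts this scalar series in the single variable $t=\ov\mu\lam$ and checks that the Taylor coefficients of $1-1/k_\cald$ are all non-negative. Since each $(\ov\mu\lam)^n$ is itself a positive kernel on $\d\times\d$, the non-negativity of the coefficients immediately gives the required positive semi-definiteness. This combinatorial/analytic check on the reciprocal of $\sum_{n\ge 0} t^n/(n+1)$ is the technical heart of the argument; once it is in hand, Theorem \ref{Dirichlet} follows by direct invocation of the CNP Pick theorem.
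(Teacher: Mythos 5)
Your proposal is correct and follows essentially the same route as the source the paper cites for this theorem (\cite[Corollary 7.41]{AgMcC2002}): necessity via the eigenvector property $M_\ph^*k_\cald(\cdot,\lam_j)=\ov{\ph(\lam_j)}k_\cald(\cdot,\lam_j)$ and contractivity of $M_\ph^*$ on the span of the kernel functions, and sufficiency by verifying that $k_\cald$ is a complete Nevanlinna--Pick kernel and invoking the McCullough--Quiggin--Agler--McCarthy Pick theorem. The one step you leave unverified --- that the reciprocal of $\sum_{n\ge0}t^n/(n+1)$ has non-positive coefficients beyond the constant term --- is exactly what Kaluza's lemma delivers, since the sequence $a_n=1/(n+1)$ is log-convex ($a_n^2\le a_{n-1}a_{n+1}$ reduces to $n^2+2n\le(n+1)^2$), so the argument closes as you intend.
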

An account of Pick theorems in the context of sundry different Hilbert function spaces, including $\mathcal{D},$ may be found in the book \cite{AgMcC2002}.

In this paper we will deviate from the supremum norm on $\hol(R_\de), \de\in(0,1)$.
An operator $X$ on a Hilbert space is called a {\em Douglas-Paulsen operator with parameter $\de$} if $\|X\|\leq 1$ and  $\|X\inv\|\leq 1/\de$, see  \cite{dp86}.
The {\em Douglas-Paulsen family}, $\f_{\mathrm{dp}}(\de)$, is the class of Douglas-Paulsen operators $X$ with parameter $\de$ such that  $\sigma(X)\subseteq R_\de$. 
We consider the {\em Douglas-Paulsen norm}\footnote{$\|\cdot\|_{\mathrm{dp}}$ is an example of a {\em calcular norm}, see \cite[Chapter 9]{amy20}}
\be \label{dpnorm}
\norm{\phi}_{\mathrm {dp}} = \sup_{X\in  \f_{\mathrm{dp}}(\de) }\norm{\phi(X)},
\ee
defined for $\phi \in \hol(R_\delta)$.
There is no guarantee that the quantity defined by equation \eqref{dpnorm} is finite.
Accordingly, we introduce the associated Banach algebra
\[
\hinf_{\mathrm {dp}}(R_\de)=\set{\phi \in \hol(R_\delta)}{\norm{\phi}_{\mathrm {dp}}<\infty}.
\]
In addition, we introduce the {\em dp-Schur class}, $\essdp$\footnote{In the notations $\|\cdot\|_{\mathrm {dp}}$ and $\mathcal{S}_{\mathrm {dp}}$ we suppress dependence on the parameter $\de$.},  which is the set of functions $\ph\in \hol(R_\de)$ such that
$\|\ph\|_{\mathrm {dp} }\leq 1.$

 An important step in the Douglas-Paulsen theory was the following estimate.
 If $X$ is a Douglas-Paulsen operator with parameter $\de$, $\sigma(X)\subseteq R_\de$ and $\phi$ is a bounded holomorphic matrix-valued function on $R_\de$ then
\be\label{dps}
\|\phi(X)\| \leq \left(2+\frac{1+\de}{1-\de}   \right) \sup_{z\in R_\de} \|\phi(z)\|.
\ee

Hence, we see from equations \eqref{dpnorm} and \eqref{dps} that 
$$\|\ph\|_\mathrm{dp} \leq \left(2+\frac{1+\de}{1-\de}   \right) \|\ph\|_{\hinf(R_\de)}$$ for
 $\ph\in\hol(R_\de)$. On the other hand, see Remark \ref{dp-inf}, 
 $\|\ph\|_{\hinf(R_\de)} \leq 
 \|\ph\|_\mathrm{dp} $, and so
the dp and supremum norms on $\hol(R_\de)$ are equivalent.  Thus,
\[
\mathrm{H}^\infty(R_\de) = \mathrm{H}_\mathrm{dp}^\infty(R_\de)
\]
as sets.  However, the reader should be aware that
\[\|\cdot\|_\mathrm{dp} \neq \|\cdot\|_{\hinf(R_\de)} \ \text{and therefore} \ \essdp \neq \ess(R_\de),
\]
a fact that Example \ref{dp-neq-infty} below demonstrates.

The power of inequality  \eqref{dps} is that it holds for all {\em matrix-valued} functions $\ph$, a fact which allowed Douglas and Paulsen to show  that if $T \in \b(\h)$ is a Douglas-Paulsen operator, then there exists an invertible $S \in \b(\h)$ such that 
\be
\|S\| \|S^{-1}\|\leq 2+\frac{1+\de}{1-\de}  
\ee
and $S T S^{-1}$ dilates  to a normal operator  with spectrum contained in the boundary $\partial R_\de$.
This result is a kind of Nagy dilation theorem for the annulus. 
In the {\em scalar case} a slightly stronger result than the inequality \eqref{dps} had been obtained earlier by A. Shields \cite[Proposition 23]{Shields}, with the smaller constant $2+\sqrt{\frac{1+\de}{1-\de}}$ on the right hand side.
Shields asked whether the constant $2+\sqrt{\frac{1+\de}{1-\de}}$  could be replaced by a quantity that remains bounded as $\de\to 1$.  This question was answered 
in the affirmative 
 by  C. Badea, B. Beckermann and M. Crouzeix  \cite{bbc2009} 
and subsequently the better constant $1+\sqrt{2}$ was established by
M. Crouzeix and A. Greenbaum \cite{Crou_Greenb}.\\

Corresponding to the dp-Schur class there is a natural variant of the classical Pick interpolation problem, which we call the {\em DP Pick problem}: 
given $n$ distinct points $\lambda_1,\ldots,\lambda_n$ in $R_\delta$ and $z_1,\ldots,z_n \in \c$, does there exist a function $\phi \in \hinf_{\mathrm {dp}}(R_\de)$ 
with $\norm{\phi}_{\mathrm {dp}} \le 1$ such that 
\be\label{200int}
\phi(\lambda_j) = z_j \qquad \text{for} \ j=1,\ldots,n ?
\ee

We shall show that there is a solvability criterion for this problem which is  parallel to Abrahamse's Theorem, but with $\g(\lam)$ replaced by a collection $\g_\mathrm{dp}(\lam)$ of kernels, which we now define.

\begin{defin}\label{dp-kernels-intro}
Let $\lambda_1, \dots, \lambda_n$ be $n$ distinct points in $R_\de$ and let $\lam=(\lam_1,\dots,\lam_n)$.
A  {\em DP Szeg\H{o} kernel} for the $n$-tuple $\lam$ is a  positive definite  $n \times n$ matrix $g = [g_{ij}]$ such that 
\be\label{210int}
[(1-\overline\lambda_i \lambda_j)g_{ij}] \ge 0\ \ \text{ and }\ \ 
\left[\left(1-\overline{\frac{\delta}{\lambda_i}} \frac{\delta}{\lambda_j}\right)g_{ij}\right] \ge 0.
\ee
The set of all DP Szeg\H{o} kernels for the $n$-tuple $\lam$ will be denoted by 
$\g_\mathrm{dp}(\lam)$.

\end{defin}

We observe that $\g_\mathrm{dp}(\lam)$
 consists of the gramians $[\ip{e_j}{e_i}]_{i,j=1}^n$ for all bases $e_1,\dots,e_n$ of an $n$-dimensional Hilbert space $\h$ such that the operator $T$ on $\h$ defined by $Te_j= \lam_je_j$ for $j=1,\dots,n$ is a Douglas-Paulsen operator.
 This and related facts are described in Section \ref{dpk}.

 The Pick interpolation theorem for the dp-norm on $\hol(R_\de)$ is the following statement (which is Theorem \ref{iff-dp-kernel-z}  from the body of the paper).

\begin{thm} \label{iff-dp-kernel-intro} Let  $\lam_1,\dots,\lam_n \in R_\de$ be distinct  and let $z_1,\dots,z_n\in\c$.
There exists $ \phi \in \essdp$ such that
\[\phi(\lambda_j) = z_j \qquad \text{for} \ j=1,\ldots,n, \]
if and only if, for all 
$g \in \mathcal{G}_{\mathrm {dp}} (\lambda)$,
\be\label{222-intro}
[(1-\overline z_i z_j)g_{ij}] \ge 0.
\ee
\end{thm}

In Section \ref{dp-sup-section} we compare the dp norm and the sup norm of a function in $\hol(R_\de)$ and we point out a connection to the Crouzeix conjecture.  In Section \ref{model-dp} we review the theory of models and realizations of holomorphic functions on $R_\de$ with dp-norm at most $1$, see Theorem \ref{thm3.8}.  In Section \ref{dpk} we introduce 
 DP-Szeg\H{o} kernels on an $n$-tuple of points in $R_\de$ and elaborate their relation to the Douglas-Paulsen class.
In Section \ref{dp-Pick} we recall another approach to the solution of DP Pick problems given in \cite[Theorem 9.46]{amy20}, and we show that solvable DP Pick problems have {\em rational} solutions. 
In Section \ref{extremal} we consider an extremally solvable DP Pick problem
$\lam_j \mapsto z_j$ for $j=1,\dots,n$, 
and show that, for such a problem there is a rational solution $\ph\in\essdp$ and 
there exists a Douglas-Paulsen operator $T$ with parameter $\de$  which acts on an $n$-dimensional Hilbert space with $\sigma(T)=\{\lam_1,\dots,\lam_n\}$ such that
$
\norm{\phi}_{\rm dp} = \norm{\phi(T)}=1,
$
see Theorem \ref{finite-extr}.

\section{The dp and sup norms on $\hol(\d)$ and $\hol(R_\delta)$} \label{dp-sup-section}

In this section we describe connections between
the Banach algebra
$\hinf_{\mathrm {dp}}(R_\de)$
and the Crouzeix  conjecture. We will prove in Proposition \ref{phi-in-holD} 
that there is a large class of functions
 $\phi \in \hol(R_\de)$, such that 
\[
\norm{\phi}_{\mathrm {dp}} = \|\phi\|_{\hinf (R_\de)}.
\]
In Example \ref{dp-neq-infty} below we show that the last relation fails to hold for the function $\ph\in\hol (R_\de)$ defined by $ \ph(z)=z +\frac{\delta}{z}$ for $z\in R_\de$.  In fact $\ph$ satisfies
\[
\| \ph \|_{\mathrm{dp}} = 2 \ \text{and}\ \| \ph\|_{\hinf (R_\de)}  = 1+\de.
\]
By an {\em elliptical domain} we shall mean the domain in the complex plane bounded by an ellipse. As a standard elliptical domain we take the set 
\be\label{defGde}
G_\de  \df \{x+iy:x,y\in\r, \frac{x^2}{(1+\de)^2} + \frac{y^2}{(1-\de)^2} < 1\},
\ee
for some $\de$ such that $0\leq\de<1$.
Note that any elliptical domain can be identified via an affine self-map of the plane with an elliptical domain of the form $G_\de$ for some $\de\in[0,1)$.

 In this paper all Hilbert spaces are complex Hilbert spaces.
For a complex Hilbert space $\h$ we denote by $\b(\h)$ the space of bounded operators on $\h$. 
If  $T\in\b(\h)$, then $W(T)$, \emph{the numerical range of $T$}, is defined by the formula
\[
W(T) = \set{\ip{Tx}{x}_\h}{x\in\h, \norm{x}=1 }.
\]

The {\em B. and F. Delyon family}, $\f_{\mathrm {bfd}}(C)$, corresponding to an open bounded convex set $C$ in $\c$ is the class of operators $T$ such that the closure of the numerical range of $T$, 
   $\overline{W(T)}$, is contained in $C$.  By \cite[Theorem 1.2-1]{GuRao97}, the spectrum $\sigma(T)$  of an operator $T$ is contained in $\overline{W(T)}$, and 
   so, by the Riesz-Dunford functional calculus, $\ph(T)$ is defined for all $\phi\in \hol(C)$ and $T\in \f_{\mathrm {bfd}}(C)$. Therefore, we may consider the calcular norm\footnote{A {\em calcular norm} on a function space is a norm that is defined with the aid of the functional calculus.  For more information on such norms the reader may consult \cite[Chapter 9]{amy20}.} 
\be \label{bfdnorm}
\norm{\phi}_{\f_{\mathrm {bfd}}(C)}=\sup_{T\in \f_{\mathrm {bfd}}(C)}\norm{\phi(T)},
\ee
defined for $\phi \in \hol(C)$, and 
the associated Banach algebra
\[
\hinf_{\mathrm {bfd}}(C)=\set{\phi \in \hol(C)}{
\norm{\phi}_{\f_{\mathrm {bfd}}(C)} <\infty}.
\]
In this paper the convex set $C$ will always be $G_\de$, and so we abbreviate the notation to $\|\cdot\|_{\mathrm {bfd}}$ in place of 
$\|\cdot\|_{\f_{\mathrm {bfd}}(G_\de)}$. Thus
\be\label{bfdnorm2}
\norm{\phi}_{\mathrm {bfd}}=\sup_{T\in \f_{\mathrm {bfd}}(G_\de)}\norm{\phi(T)},
\ee
defined for $\phi \in \hol(G_\delta)$.
In addition we introduce the {\em bfd-Schur class}, $\mathcal{S}_{\mathrm {bfd}}$, of functions on $G_\de$, which is the set of functions
$f \in \hol(G_\de)$ such that
$\|f\|_{\mathrm {bfd} }\leq 1.$\footnote{In the notations $\|\cdot\|_{\mathrm {bfd}}$ and $\mathcal{S}_{\mathrm {bfd}}$ we suppress dependence on the parameter $\de$.}
The bfd-norm is named in recognition of a celebrated theorem \cite{bfd1999} of the brothers B. and F. Delyon, 
which states that, if $p$ is a polynomial, $\h$ is a Hilbert space and $T\in \b(\h)$ then 
\[
\|p(T)\| \leq \kappa(W(T))\|p\|_{W(T)},
\]
where  $\|\cdot\|_{W(T)}$ denotes the supremum norm on $W(T)$, and, for any bounded convex set $C$ in $\c$, $\kappa(C)$ is defined by
\[
\kappa(C) = 3+\left(\frac{2\pi(\mathrm{diam}(C))^2}{\mathrm{area}(C)}\right)^3.
\]
Let us write
\[
K(\f_{\mathrm {bfd}}(C)) = \sup_{\phi \in \hol(C): \|\phi\|_{\hinf(C)} \le 1} \norm{\phi}_{\f_{\mathrm {bfd}}(C)},
\]
and the Crouzeix universal constant
\[
K_{\mathrm {bfd}} =\sup \{ K(\f_{\mathrm {bfd}}(C)): \ C \ \text{is a  bounded convex set in } \ \c\}.
\]
In \cite{Cr2004}, Crouzeix proved $K_{\mathrm {bfd}} \le 12$ and conjectured that $ K_{\mathrm {bfd}} = 2$.
Subsequently Crouzeix and Palencia \cite{CrPal2017} proved that $ K_{\mathrm {bfd}} \leq 1+\sqrt{2}$. Still more recently  Crouzeix and Kressner \cite{Crou_Kress} showed that $W(T)$ is a {\em complete} $(1+\sqrt{2})$-spectral set for $T$.

Let $\pi: R_\de \to G_\de$ be defined by $\pi(z)=z +\frac{\delta}{z}$, $ z \in R_\de$.
Now observe that if $\phi \in \hol(G_\delta)$
 then we may define $\pi^\sharp(\phi) \in \hol(R_\delta)$ by the formula
\[
\pi^\sharp(\phi)(\lambda)=\phi(\pi(\lambda)) \qquad\text{ for all } \lambda\in R_\delta.
\] 
We record the following simple fact from complex analysis without proof.
\begin{lem}\label{dp.lem.10} Let $\de \in (0,1)$ and let
 $\psi \in \hol(R_\delta)$. Then $\psi \in \ran \pi^\sharp$ if and only if $\psi$ is \emph{symmetric} with respect to the involution $\lam\mapsto\de/\lam$ of $R_\de$, that is, if and only if $\psi$ satisfies
\[
\psi(\delta/\lambda)=\psi(\lambda)
\]
for all $\lambda \in R_\delta$.
\end{lem}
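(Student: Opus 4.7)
The forward implication is a one-line calculation: if $\psi=\pi^\sharp(\phi)$ then $\pi(\de/\lam)=\de/\lam+\lam=\pi(\lam)$, whence $\psi(\de/\lam)=\phi(\pi(\de/\lam))=\phi(\pi(\lam))=\psi(\lam)$.

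For the converse, my plan is to construct a candidate $\phi\in\hol(G_\de)$ directly from a symmetric $\psi$ and then use Riemann's removable singularity theorem to extend across the branch values of $\pi$. The first step is to record the covering structure of $\pi$: the equation $\pi(\lam)=\pi(\mu)$ rearranges to $(\lam-\mu)(\lam\mu-\de)=0$, so each fibre on $R_\de$ is a pair $\{\lam,\de/\lam\}$, collapsing to a singleton precisely at $\lam=\pm\sqrt\de$. A brief calculation with $\lam=re^{i\theta}$ shows $\pi(R_\de)=G_\de$, with both boundary circles $|\lam|=1$ and $|\lam|=\de$ mapping onto $\partial G_\de$. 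The derivative $\pi'(\lam)=1-\de/\lam^2$ vanishes exactly at $\pm\sqrt\de\in R_\de$, with critical values $\pm 2\sqrt\de\in G_\de$, so $\pi$ is a local biholomorphism off these two points.

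Given a symmetric $\psi$, I would define $\phi:G_\de\to\c$ by $\phi(w)=\psi(\lam)$ for any $\lam\in\pi\inv(w)$; symmetry of $\psi$ makes this well-defined. On $G_\de\setminus\{\pm 2\sqrt\de\}$ the map $\pi$ admits local holomorphic inverses, so $\phi$ is holomorphic there. Near each critical value $\phi$ is bounded, since $\psi$ is bounded on a compact neighbourhood of the corresponding critical point in $R_\de$; Riemann's removable singularity theorem then places $\phi$ in $\hol(G_\de)$, and by construction $\pi^\sharp(\phi)=\psi$. The one step that is not purely computational is the extension across the two branch values, and that is precisely what Riemann's theorem is designed for; an alternative, more concrete route would be to pass to a local coordinate $\zeta$ near $\sqrt\de$ in which the involution $\lam\mapsto\de/\lam$ becomes $\zeta\mapsto-\zeta$ and argue that both $\psi$ and $\pi-2\sqrt\de$ are even in $\zeta$ (the latter vanishing to order $2$), but this is more fiddly than simply invoking removable singularities.
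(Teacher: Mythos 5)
Your argument is correct. The paper states this lemma explicitly without proof (``We record the following simple fact from complex analysis without proof''), so there is nothing to compare against; your route --- well-definedness of $\phi(w)=\psi(\lam)$ via the fibre identity $(\lam-\mu)(\lam\mu-\de)=0$, local holomorphic inverses of $\pi$ away from $\pm\sqrt\de$, and Riemann's removable singularity theorem at the two critical values $\pm2\sqrt\de$ --- is the standard one. The only step worth making explicit is that preimages of points $w$ near $\pm2\sqrt\de$ cluster only at $\pm\sqrt\de$, which is immediate from solving $\lam^2-w\lam+\de=0$: both roots $\tfrac12\bigl(w\pm\sqrt{w^2-4\de}\bigr)$ tend to $\pm\sqrt\de$ as $w\to\pm2\sqrt\de$, so boundedness of $\phi$ near the critical values does follow from boundedness of $\psi$ on a compact neighbourhood of $\pm\sqrt\de$.
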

The following result, which is \cite[Theorem 11.25]{aly23},  gives an intimate connection between the $\norm{\cdot}_{\mathrm {dp}}$ and $\norm{\cdot}_{\mathrm {bfd}}$ norms.
\begin{thm}\label{dp.thm.10}  Let $\de \in (0,1)$.
The mapping $\pi^\sharp$ is an isometric isomorphism from $\hinf_{\mathrm {bfd}}(G_\de)$ onto the set of symmetric functions with respect to the involution $\lam\mapsto\de/\lam$ in $\hinf_{\mathrm {dp}}(R_\de)$,  so that, for all $\phi\in \hol(G_\de)$,
\be\label{bfd=dp}
\|\phi\|_{\mathrm {bfd}} = \|\phi\circ\pi\|_{\mathrm {dp}}.
\ee
\end{thm}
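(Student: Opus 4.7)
The plan is to reduce the isometric isomorphism assertion to the pointwise norm identity $\|\phi\|_{\mathrm{bfd}} = \|\phi\circ\pi\|_{\mathrm{dp}}$ for every $\phi\in\hol(G_\delta)$, and then to prove this identity by two operator-theoretic inclusions tied to $\pi(z)=z+\delta/z$. The preliminary structural work is cheap: Lemma \ref{dp.lem.10} already gives that $\pi^\sharp$ is a bijection from $\hol(G_\delta)$ onto the subalgebra of $\hol(R_\delta)$ fixed by $\lambda\mapsto\delta/\lambda$; linearity is immediate and multiplicativity follows from $(\phi\psi)\circ\pi=(\phi\circ\pi)(\psi\circ\pi)$. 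So once the norm equality is proved, the passage between $\hinf_{\mathrm{bfd}}(G_\delta)$ and the symmetric part of $\hinf_{\mathrm{dp}}(R_\delta)$ is automatic.

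For the direction $\|\phi\circ\pi\|_{\mathrm{dp}}\leq\|\phi\|_{\mathrm{bfd}}$, I would fix $X\in\f_{\mathrm{dp}}(\delta)$ and set $T=\pi(X)=X+\delta X^{-1}$. Since $\pi\in\hol(R_\delta)$ and $\sigma(X)\subseteq R_\delta$, the Riesz--Dunford chain rule yields $(\phi\circ\pi)(X)=\phi(T)$. The key geometric lemma is
\[
\overline{W(T)}\subseteq \overline{G_\delta}\qquad\text{whenever } X\in\f_{\mathrm{dp}}(\delta).
\]
Granting this, a harmless perturbation (replacing $X$ by $rX$ with $r$ slightly below $1$, or convex-combining $T$ with an interior point) pushes $\overline{W(T)}$ into the open $G_\delta$, so $\|\phi(T)\|\leq\|\phi\|_{\mathrm{bfd}}$, and taking the supremum over $X$ delivers the bound. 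The lemma itself would be proved by testing $\langle Tv,v\rangle$ against the supporting half-planes of $\overline{G_\delta}$ parameterized by the boundary $\partial G_\delta=\{e^{i\theta}+\delta e^{-i\theta}\}$, reducing to scalar inequalities on the joint numerical values $a=\langle Xv,v\rangle$, $b=\langle X^{-1}v,v\rangle$.

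For the direction $\|\phi\|_{\mathrm{bfd}}\leq\|\phi\circ\pi\|_{\mathrm{dp}}$, I would want, given $T$ with $\overline{W(T)}\subseteq G_\delta$, to invert $\pi$ on $T$: the identity $\pi(X)=T$ is equivalent to $X^2-TX+\delta I=0$, with formal roots $X_\pm=\tfrac{1}{2}(T\pm\sqrt{T^2-4\delta I})$. Because $G_\delta$ is simply connected and its two foci $\pm 2\sqrt\delta$ are connected by the closed segment $[-2\sqrt\delta,2\sqrt\delta]$, a holomorphic branch of $\sqrt{z^2-4\delta}$ can be chosen on $G_\delta$, and the Riesz--Dunford calculus produces an operator $X$ satisfying $\pi(X)=T$. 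Spectral mapping gives $\sigma(X)\subseteq R_\delta$, and one selects the branch so that the scalar identity $|\tfrac{1}{2}(t-\sqrt{t^2-4\delta})|\in[\delta,1]$ on $\overline{G_\delta}$ translates into $\|X\|\leq 1$ and $\|X^{-1}\|\leq 1/\delta$, making $X$ a DP operator; then $\phi(T)=(\phi\circ\pi)(X)$ yields $\|\phi(T)\|\leq\|\phi\circ\pi\|_{\mathrm{dp}}$.

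The main obstacle is that the latter norm bounds are \emph{not} consequences of a bare functional-calculus construction: naively the Crouzeix-type inequality on $G_\delta$ only gives $\|X\|\leq K$ with $K$ the bfd constant, which is too weak. The sharp bound must instead come from a dilation: one dilates $T$ to a normal operator $N$ with spectrum on $\overline{G_\delta}$ (or on the appropriate double-sheeted cover), defines $X$ by applying the chosen branch of $\pi^{-1}$ to $N$ via the normal functional calculus, and then observes that $\phi(T)$ is a compression of $\phi(N)=(\phi\circ\pi)(X)$. Producing such a dilation (which is operator-theoretically the substantive step) is most cleanly accomplished by appealing to the realization machinery of Section \ref{model-dp} of this paper, specifically Theorem \ref{thm3.8} and its counterpart for $\mathcal{S}_{\mathrm{bfd}}$: the two realization structures match term-for-term under $\pi^\sharp$, which simultaneously delivers both inequalities and hence the isometry. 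This realization-matching route is, in my view, the cleanest path and the one I would pursue in detail.
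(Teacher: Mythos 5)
First, a point of reference: the paper does not prove Theorem \ref{dp.thm.10} at all --- it is imported verbatim from \cite[Theorem 11.25]{aly23} --- so there is no internal argument to compare yours with, and your proposal must stand on its own. It does not, for two reasons. For the inequality $\|\phi\circ\pi\|_{\mathrm{dp}}\le\|\phi\|_{\mathrm{bfd}}$ everything rests on your ``key geometric lemma'' that $\overline{W(X+\delta X^{-1})}\subseteq\overline{G_\delta}$ whenever $X\in\f_{\mathrm{dp}}(\delta)$. That lemma is true, but it is essentially one half of the main theorem of \cite{aly23}, and it cannot be obtained by the reduction you propose to ``scalar inequalities on the joint numerical values $a=\langle Xv,v\rangle$, $b=\langle X^{-1}v,v\rangle$'': the only constraints those two numbers satisfy separately are $|a|\le 1$ and $|b|\le 1/\delta$, which yield $|a+\delta b|\le 2$, and the disc of radius $2$ is nowhere near contained in $\overline{G_\delta}$ (whose minor semi-axis is $1-\delta$). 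The entire content of the lemma is the rigid coupling between $\langle Xv,v\rangle$ and $\langle X^{-1}v,v\rangle$ at the \emph{same} vector $v$, i.e.\ the operator inequality obtained by testing $X+\delta X^{-1}$ against each supporting half-plane of $\overline{G_\delta}$; this is a genuinely operator-theoretic fact, not a scalar one, and your sketch gives no way to prove it.

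The second gap is fatal as stated. For $\|\phi\|_{\mathrm{bfd}}\le\|\phi\circ\pi\|_{\mathrm{dp}}$ you propose to dilate each $T$ with $\overline{W(T)}\subseteq G_\delta$ to a normal $N$ with $\sigma(N)\subseteq\overline{G_\delta}$ so that $\phi(T)$ is a compression of $\phi(N)=(\phi\circ\pi)(X)$ with $X=\pi^{-1}(N)$ normal. No such dilation exists, even in degree one: it would give $\|T\|\le\|N\|\le\max_{\overline{G_\delta}}|z|=1+\delta$ for every such $T$, hence $\|z\|_{\mathrm{bfd}}\le 1+\delta$, contradicting Example \ref{dp-neq-infty} of this paper, which records $\|z\|_{\mathrm{bfd}}=2$. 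Equivalently, if $X$ is normal with spectrum in $\overline{R_\delta}$ then $\|(\phi\circ\pi)(X)\|\le\|\phi\|_{\hinf(G_\de)}$, so this route could only ever prove $\|\phi\|_{\mathrm{bfd}}\le\|\phi\|_{\hinf(G_\de)}$, which is false; the whole point of the theorem is that the dp-norm exceeds the sup norm. Your closing suggestion --- match the DP realization of Theorem \ref{thm3.8} with a corresponding realization for $\mathcal{S}_{\mathrm{bfd}}$ on $G_\delta$ --- is indeed the correct mechanism and is essentially how \cite{aly23} proceeds, but the bfd realization theorem is itself a substantial result of \cite{aly23, aly24B} that is not available in this paper, and you do not carry out the matching. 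In sum, the proposal identifies the right target identities but supplies a proof of neither of the two operator-theoretic facts that carry all the weight, and one of its two main steps is provably impossible.
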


\begin{rem}\label{dp-inf}
One can see that, for $\phi \in \hol(R_\delta)$, 
\begin{align} \label{dp-sup}
\norm{\phi}_{\mathrm {dp}} &= \sup_{X\in  \f_{\mathrm{dp}}(\de) }\norm{\phi(X)}
\notag\\
&\ge \sup_{X\in  \f_{\mathrm{dp}}(\de) \ \text{and} \ X \ \text{ is a scalar operator} \ }\norm{\phi(X)} \notag\\
&= \sup_{\lambda \in R_\delta}{|\phi(\lambda)|} =\|\phi\|_{\hinf (R_\de)}.
\end{align}
\end{rem}

\begin{ex}\label{dp-neq-infty}
Consider the function $f\in\hol (R_\de)$ defined by $f(z)=z +\frac{\delta}{z}$.   
Then 
\[
\| f \|_{\mathrm{dp}} = 2 \ \text{and}\ \| f \|_{\hinf (R_\de)}  = 1+\de.
\]
Moreover, the Crouzeix universal constant $ K_{\mathrm {bfd}} \geq 2$.
\begin{proof}
If $\phi(z) = z$ for $z \in G_\de$  and $\pi: R_\de \to G_\de$ is defined by $\pi(z)=z +\frac{\delta}{z}$, then 
\[\phi \circ \pi (z)=z +\frac{\delta}{z} =f(z) \ \text{ for} \ z \in R_\de.\]
By Theorem \ref{dp.thm.10}, 
\[\|\phi\|_{\mathrm {bfd}} = \|\phi \circ \pi\|_{\mathrm {dp}}.\]
 By  \cite[Example 4.26]{aly24B}, $\| \phi \|_{\mathrm{bfd}} = 2$.
Therefore, 
\[
\| f \|_{\mathrm{dp}} =  \|\phi \circ \pi\|_{\mathrm {dp}}= \| \phi \|_{\mathrm{bfd}} = 2.
\]
Note that 
\[
\| f \|_{\hinf (R_\de)} =\sup_{z \in R_\de} \left| z +\frac{\delta}{z} \right| = 1+\de.
\]
Note that  $\phi(z)=z$ has bfd-norm equal to $2$ and sup norm on $G_\de$ equal to $1+\de$.
Hence the Crouzeix universal constant $ K_{\mathrm {bfd}} \geq 2$. \end{proof}

\end{ex}

\begin{rem} In \cite{Ts2022}  G. Tsikalas proved a result about the annulus as a $K$-spectral set. We restate his result in the notation of this paper as follows.
Let $K(\de)$ denote the smallest constant such that $R_\de$ is a $K(\de)$-spectral set for any bounded linear operator $T \in \fdp(\delta)$.
He used the functions $g_n$ in $\hol(R_\de)$ defined by
$$g_n(z) = \frac{\de^n}{z^n} + z^n, \quad \text{for}\  n=1, 2, \dots,$$
to show that $K(\de) \ge 2$, for all $\de\in(0,1)$.
\end{rem}

\begin{prop} \label{phi-in-holD} 
If $\phi \in \hol(\d)$, then 
\begin{align} \label{dp-sup-D}
\norm{\phi}_{\mathrm {dp}} &= \sup_{X\in  \f_{\mathrm{dp}}(\de) }\norm{\phi(X)}
\notag\\
&=\|\phi\|_{\hinf (R_\de)} =\|\phi\|_{\hinf(\d)}.
\end{align}
\end{prop}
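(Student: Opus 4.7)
The plan is to sandwich $\|\phi\|_{\mathrm{dp}}$ between $\|\phi\|_{\hinf(R_\de)}$ and $\|\phi\|_{\hinf(\d)}$, and then to verify that these two sup-norms coincide for $\phi \in \hol(\d)$. For the coincidence, the inequality $\|\phi\|_{\hinf(R_\de)} \leq \|\phi\|_{\hinf(\d)}$ is immediate from $R_\de \subset \d$. Conversely, since $\phi$ is holomorphic on all of $\d$, the maximum modulus principle on $\d$ gives $\sup_{|z|=\de}|\phi| \leq \|\phi\|_{\hinf(\d)}$, and the maximum modulus principle on the annulus then reduces $\|\phi\|_{\hinf(R_\de)}$ to the maximum of the suprema on its two boundary circles, both dominated by $\|\phi\|_{\hinf(\d)}$.

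The lower bound $\|\phi\|_{\mathrm{dp}} \geq \|\phi\|_{\hinf(R_\de)}$ is already in hand, as it is precisely the content of equation \eqref{dp-sup} in the Remark preceding the proposition, obtained by restricting the defining supremum to scalar operators $X = \lambda I$ with $\lambda \in R_\de$.

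The substantive step is the upper bound $\|\phi\|_{\mathrm{dp}} \leq \|\phi\|_{\hinf(\d)}$. If $\|\phi\|_{\hinf(\d)} = \infty$ the statement is vacuous, so I would assume $\phi \in \hinf(\d)$. Fix $X \in \f_{\mathrm{dp}}(\de)$; by definition $\|X\| \leq 1$, so $X$ is a Hilbert-space contraction and $\sigma(X) \subseteq R_\de$ is a compact subset of $\d$. I would then invoke the Sz.-Nagy--Foias $\hinf(\d)$ functional calculus for contractions (the $\hinf$ refinement of von Neumann's inequality) to conclude $\|\phi(X)\| \leq \|\phi\|_{\hinf(\d)}$; taking the supremum over $X$ gives the required bound. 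The one technical point I anticipate as the main (minor) obstacle is verifying that the operator $\phi(X)$ produced by this calculus coincides with the Riesz-Dunford operator used in the definition \eqref{dpnorm} of $\|\phi\|_{\mathrm{dp}}$. This can be settled by approximating $\phi$ via its dilates $\phi_r(z) = \phi(rz)$ for $r \in (0,1)$, which lie in the disc algebra and satisfy $\|\phi_r\|_{\hinf(\d)} \leq \|\phi\|_{\hinf(\d)}$, together with Taylor polynomials of the $\phi_r$ on $\overline{\d}$: the polynomial case is governed by ordinary von Neumann, both calculi are continuous under the resulting double limit, and letting $r \to 1^-$ with $\sigma(X)$ compactly inside $\d$ yields $\|\phi(X)\| \leq \|\phi\|_{\hinf(\d)}$ in the Riesz-Dunford sense used by $\|\cdot\|_{\mathrm{dp}}$.
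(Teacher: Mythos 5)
Your proposal is correct and follows essentially the same route as the paper's proof: the lower bound from scalar operators (equation \eqref{dp-sup}), the identity $\|\phi\|_{\hinf(R_\de)}=\|\phi\|_{\hinf(\d)}$ by the maximum principle, and the upper bound $\|\phi\|_{\mathrm{dp}}\le\|\phi\|_{\hinf(\d)}$ via von Neumann's inequality. The only difference is that you spell out the approximation argument needed to pass from the polynomial case of von Neumann to the Riesz--Dunford calculus for general $\phi\in\hol(\d)$, a point the paper's proof leaves implicit.
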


\begin{proof} By the  definition of the dp-norm,
\begin{align} \label{dp-sup-D-1}
\norm{\phi}_{\mathrm {dp}} &= \sup_{X\in  \f_{\mathrm{dp}}(\de) }\norm{\phi(X)}
\notag\\
&\leq \sup_{\|X\| \le 1} \norm{\phi(X)} \  & \text{by the definition of }\f_{\mathrm{dp}}(\de)
\notag\\
&= \|\phi\|_{\hinf(\d)}.\ &  \text{by von Neumann's  inequality}
\end{align}
By the Maximum principle, for $\phi \in \hol(\d)$,
\be \label{maximum-pr}
\|\phi\|_{\hinf (R_\de)}= \|\phi\|_{\hinf(\d)}.
\ee
By inequality  \eqref{dp-sup}, $\norm{\phi}_{\mathrm {dp}} \geq \|\phi\|_{\hinf (R_\de)}$ and, by inequality  \eqref{dp-sup-D-1}, 
 $\norm{\phi}_{\mathrm {dp}} \leq \|\phi\|_{\hinf(\d)}$, and so 
\be
\norm{\phi}_{\mathrm {dp}} =\|\phi\|_{\hinf(\d)}.
\ee
Therefore, the equalities \eqref{dp-sup-D} hold. \end{proof}

\section{Models and realizations of holomorphic functions on $R_\delta$} \label{model-dp}

 In this section we review some known results on the function theory  of holomorphic functions in the dp-norm on an  annulus. The  models and realizations of holomorphic functions $\phi:R_\de\to\c$ such that
$\|\phi\|_{\mathrm {dp}} \leq 1$ are presented in 
\cite[Theorem 9.46]{amy20}. The theorem  states the following.

\begin{thm}\label{dpmodel}
Let $\delta \in (0,1)$. Let $\phi:R_\de\to\c$ be holomorphic and satisfy $\|\phi\|_{\mathrm {dp}} \leq 1$. There exists a $\mathrm {dp}$-model $(\n,v)$ of $\phi$ with parameter $\de$, in the sense that there are Hilbert spaces $\n^+,\n^-$ and an ordered pair $v=(v^+,v^-)$ of holomorphic functions, where 
$v^+:R_\de \to \n^+$ and $v^-:R_\de\to \n^-$ satisfy, for all $z,w \in R_\de$,
\[
1-\overline{\phi(w)}\phi(z) = (1-\overline w z)\ip{v^+(z)}{v^+(w)}_{\n^+} + (\overline w z-\de^2)\ip{v^-(z)}{v^-(w)}_{\n^-}.
\]
\end{thm}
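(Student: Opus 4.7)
My approach would follow the classical Hahn-Banach / Kolmogorov factorisation route. Write $K_\phi(z,w) = 1 - \overline{\phi(w)}\phi(z)$. The theorem is equivalent to the existence of Hermitian positive-definite kernels $K^\pm$ on $R_\delta \times R_\delta$, holomorphic in $z$ and anti-holomorphic in $w$, such that
\[
K_\phi(z,w) = (1 - \bar w z) K^+(z,w) + (\bar w z - \delta^2) K^-(z,w).
\]
Once $K^\pm$ are in hand, Kolmogorov factorisation produces Hilbert spaces $\mathcal{N}^\pm$ and maps $v^\pm : R_\delta \to \mathcal{N}^\pm$ with $\langle v^\pm(z), v^\pm(w)\rangle = K^\pm(z,w)$; realising $\mathcal{N}^\pm$ as the reproducing-kernel Hilbert space of $K^\pm$ identifies $v^\pm(z)$ with the kernel section at $z$, whose analyticity in $z$ is inherited from that of $K^\pm(\cdot, w)$. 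So the task reduces to constructing $K^\pm$.

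I would construct $K^\pm$ locally on finite subsets and then pass to the limit. For a finite set $F = \{\lambda_1,\dots,\lambda_n\} \subseteq R_\delta$, consider the closed convex cone
\[
\mathcal{C}_F = \bigl\{[(1-\bar\lambda_i\lambda_j)p_{ij} + (\bar\lambda_i\lambda_j - \delta^2)q_{ij}]_{i,j=1}^n : p, q \ge 0 \bigr\}
\]
in the Hermitian $n\times n$ matrices. The finite-$F$ instance of the decomposition is the assertion $[K_\phi(\lambda_i,\lambda_j)] \in \mathcal{C}_F$. A direct computation of the dual cone under the trace pairing shows it to consist of the Hermitian $h$ for which $[(1-\bar\lambda_i\lambda_j)h_{ij}] \ge 0$ and $[(\bar\lambda_i\lambda_j - \delta^2)h_{ij}] \ge 0$; summing these two conditions and using $1-\delta^2 > 0$ forces $h \ge 0$, so after the harmless diagonal rescaling $h_{ij} \mapsto h_{ij}/(\bar\lambda_i\lambda_j)$ this dual cone coincides with $\mathcal{G}_{\mathrm{dp}}(\lambda)$ from Definition \ref{dp-kernels-intro}. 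Hahn-Banach therefore reduces the finite-$F$ problem to the inequality
\[
[(1 - \overline{\phi(\lambda_i)}\phi(\lambda_j))h_{ij}] \ge 0 \quad \text{for every } h \in \mathcal{G}_{\mathrm{dp}}(\lambda).
\]
Here the hypothesis $\|\phi\|_{\mathrm{dp}} \le 1$ enters decisively: by the remark after Definition \ref{dp-kernels-intro}, each such $h$ is the gramian of a basis $e_1,\dots,e_n$ of an $n$-dimensional Hilbert space on which $Te_j = \lambda_j e_j$ is a Douglas-Paulsen operator with $\sigma(T) \subseteq R_\delta$, so $T \in \mathcal{F}_{\mathrm{dp}}(\delta)$ and $\|\phi(T)\| \le 1$. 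Since $\phi(T)e_j = \phi(\lambda_j)e_j$ by the Riesz-Dunford calculus, the matrix of $I - \phi(T)^*\phi(T)$ in the basis $\{e_j\}$ is precisely the one displayed, hence positive semidefinite.

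The remaining step is to pass from the resulting finite-$F$ decompositions --- each producing positive matrices $K_F^\pm$ --- to global positive-definite kernels $K^\pm$ on $R_\delta \times R_\delta$ that are separately holomorphic in $z$ and anti-holomorphic in $w$. This is, to my mind, the main technical obstacle. The diagonal evaluation $z=w$ gives the pointwise bounds $(1-|z|^2)K_F^+(z,z) \le 1$ and $(|z|^2 - \delta^2)K_F^-(z,z) \le 1$, and the reproducing-kernel Cauchy-Schwarz inequality $|K_F^\pm(z,w)|^2 \le K_F^\pm(z,z)K_F^\pm(w,w)$ then gives uniform control of $K_F^\pm$ on compact subsets. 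A net-limit over the directed system of finite subsets, combined with a diagonal extraction, yields pointwise limits $K^\pm$ satisfying the global decomposition. The delicate point is that the $K_F^\pm$ are not canonically determined, so to ensure that the limits $K^\pm$ are genuinely separately analytic one must make a coherent extremal selection --- for instance, at each $F$ choose $(K_F^+, K_F^-)$ to minimise $\mathrm{tr}(K_F^+ + K_F^-)$ --- so that the selected finite matrices fit together to arise from globally holomorphic vector-valued functions. Once this is secured, Kolmogorov factorisation of $K^\pm$ yields the ordered pair $v = (v^+, v^-)$ and the theorem follows.
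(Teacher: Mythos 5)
First, note that the paper does not prove Theorem \ref{dpmodel} itself but quotes it from \cite[Theorem 9.46]{amy20}, so your proposal must be measured against the standard proof there, whose overall architecture (finite-set duality followed by globalization) you have correctly identified. Your finite-set step is sound: the cone $\mathcal{C}_F$ is closed (this deserves a word --- the diagonals of the two summands are separately controlled by the diagonal of the sum because $1-|\lambda_i|^2$ and $|\lambda_i|^2-\delta^2$ are both positive --- but it is routine), its dual is, up to the diagonal congruence you describe, the closure of $\mathcal{G}_{\mathrm{dp}}(\lambda)$, and the positivity of $[(1-\overline{\phi(\lambda_i)}\phi(\lambda_j))h_{ij}]$ for $h$ in that dual cone follows from $\|\phi\|_{\mathrm{dp}}\le 1$ exactly as in the proof of Theorem \ref{iff-dp-kernel-z}. (A small point: the dual cone contains singular positive semi-definite $h$, which are not gramians of bases; either approximate by $h+\epsilon I$, which stays in the dual cone, or run the operator argument on the quotient by the null space of the pre-inner product, as the paper does.) This step is in substance the combination of Theorem \ref{Pick-dp} with the argument of Theorem \ref{iff-dp-kernel-z}.

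The genuine gap is the globalization. The trace-minimising selection you propose does not do what is needed, for two reasons. First, the minimiser over $F'$ need not restrict to the minimiser over $F\subset F'$, so the net of selected matrices has no reason to converge and you are thrown back on a compactness/diagonal extraction, at which point the extremal selection buys nothing. Second, and more fundamentally, a pointwise limit of matrices indexed by finite sets carries no analyticity: the identity $1-\overline{\phi(w)}\phi(z)=(1-\bar w z)K^+(z,w)+(\bar w z-\delta^2)K^-(z,w)$ constrains only the sesquiholomorphic combination on the right, and one may perturb $K^+$ and $K^-$ by compensating non-analytic terms, so separate analyticity of the limits is not automatic and your argument never establishes it. The standard repair is to make the finite-stage objects globally holomorphic \emph{before} taking limits: from each finite decomposition the lurking isometry (as in Theorem \ref{thm3.8}) produces a function $\phi_F\in\mathcal{S}_{\mathrm{dp}}$ agreeing with $\phi$ on $F$ together with a globally defined model whose kernels $K_F^{\pm}(z,w)=\langle v_F^{\pm}(z),v_F^{\pm}(w)\rangle$ are holomorphic in $z$ and antiholomorphic in $w$ on all of $R_\delta\times R_\delta$. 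Your diagonal bounds and Cauchy--Schwarz then make $\{K_F^{\pm}\}$ a locally bounded family of functions holomorphic in $(z,\bar w)$; Montel's theorem, applied along an increasing exhaustion whose union is dense, yields locally uniformly convergent subsequences with sesquiholomorphic limits, and Vitali forces $\phi_F\to\phi$. With that replacement of your limiting step the proof goes through.
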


\begin{defin}
A {\em positive semi-definite function} on a set $X$ is a function $A: X\times X \to \c$ such that, for any positive integer $n$ and any points $x_1,\dots,x_n \in X$, the $ n\times n$ matrix $[A(x_j,x_i)]_{i,j=1}^n$ is positive semi-definite.

  We shall write 
\[
  [A(x,y)] \geq 0, \ \text{for all} \ x,y\in X,
\]
to mean that $A$ is a positive semi-definite function on $X$.
\end{defin}

\begin{thm}\label{thm.10}
$\phi \in \essdp$ if and only if there exist a pair of positive semi-definite functions $A$ and $B$ on $R_\delta$ such that 
\be\label{100}
1-\overline{\phi(\mu)}\phi(\lambda)=
(1-\overline\mu \lambda)A(\lambda,\mu)+
(1-\frac{\delta}{\overline{\mu}}\frac{\delta}{\lambda})B(\lambda,\mu)
\ee
for all $\lambda,\mu \in R_\delta$.  
\end{thm}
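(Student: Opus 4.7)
The plan is to prove Theorem \ref{thm.10} by treating each direction separately. The forward direction will be an immediate consequence of Theorem \ref{dpmodel} together with a Schur-product observation; the reverse direction will reduce, via the same observation run backwards, to the converse of Theorem \ref{dpmodel} (also contained in \cite[Theorem 9.46]{amy20}), or alternatively to a lurking-isometry argument.

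For the forward implication, I would invoke Theorem \ref{dpmodel} to obtain Hilbert spaces $\mathcal{N}^\pm$ and holomorphic functions $v^+:R_\de\to\mathcal{N}^+$ and $v^-:R_\de\to\mathcal{N}^-$ with
\[
1 - \overline{\phi(\mu)}\phi(\lam) = (1 - \bar\mu\lam)\ip{v^+(\lam)}{v^+(\mu)}_{\mathcal{N}^+} + (\bar\mu\lam - \de^2)\ip{v^-(\lam)}{v^-(\mu)}_{\mathcal{N}^-}.
\]
Since $1 - \overline{(\de/\mu)}(\de/\lam) = (\bar\mu\lam - \de^2)/(\bar\mu\lam)$, the natural choice is
\[
A(\lam,\mu) \df \ip{v^+(\lam)}{v^+(\mu)}_{\mathcal{N}^+},\qquad B(\lam,\mu) \df \bar\mu\lam\,\ip{v^-(\lam)}{v^-(\mu)}_{\mathcal{N}^-};
\]
the extra factor $\bar\mu\lam$ in $B$ cancels against the denominator of $1 - \de^2/(\bar\mu\lam)$, and \eqref{100} drops out of the model identity. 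Positive semi-definiteness of $A$ is automatic as a Gramian, and that of $B$ follows from the Schur product theorem, since $(\lam,\mu)\mapsto\bar\mu\lam$ is a rank-one positive semi-definite kernel on $R_\de$.

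For the reverse implication I would exploit the same Schur-product identity in reverse. Because no point of $R_\de$ is zero, the rank-one kernel $(\lam,\mu)\mapsto 1/(\bar\mu\lam)$ is positive semi-definite, and hence $\tilde B(\lam,\mu) \df B(\lam,\mu)/(\bar\mu\lam)$ is positive semi-definite as well. Rewriting \eqref{100} in terms of $\tilde B$ yields
\[
1 - \overline{\phi(\mu)}\phi(\lam) = (1 - \bar\mu\lam)A(\lam,\mu) + (\bar\mu\lam - \de^2)\tilde B(\lam,\mu),
\]
which (after Kolmogorov decomposition of $A$ and $\tilde B$) is precisely the kernel form of a dp-model for $\phi$. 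The converse half of Theorem \ref{dpmodel}, also in \cite[Theorem 9.46]{amy20}, then gives $\|\phi\|_{\mathrm{dp}}\leq 1$, so $\phi\in\essdp$.

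The main obstacle is precisely this reverse step: moving from a kernel-theoretic positivity to an operator-norm bound across the entire family $\fdp(\de)$. If one wishes to avoid citing the converse direction, the standard lurking-isometry machinery applies: rearranging the displayed identity as $1 + \bar\mu\lam A + \de^2\tilde B = \overline{\phi(\mu)}\phi(\lam) + A + \bar\mu\lam\tilde B$ exhibits both sides as Gramians on matching families of vectors, from which one extracts a contractive colligation $V = \bpm a & \beta \\ \gamma & D \epm$ and a realization $\phi(\lam) = a + \beta\,\rho(\lam)(I - D\rho(\lam))\inv \gamma$ with $\rho(\lam) = \bpm \lam I & 0 \\ 0 & (\de/\lam)I \epm$. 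Evaluating this realization at any $T\in\fdp(\de)$ yields $\|\phi(T)\|\leq 1$, because the defining conditions $\|T\|\leq 1$ and $\|\de T\inv\|\leq 1$ make $\rho(T)$ a simultaneous contraction — which is exactly the operator-theoretic counterpart of the two positive kernel factors $(1-\bar\mu\lam)$ and $(\bar\mu\lam-\de^2)$.
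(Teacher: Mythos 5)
Your argument is correct. Note, though, that the paper does not actually prove Theorem \ref{thm.10}: it defers entirely to \cite[Definition 9.44 and Theorem 9.46]{amy20}, so there is no in-paper proof to match step by step. What you supply is a legitimate self-contained route, and it uses exactly the machinery the paper deploys elsewhere: your Schur-product observation that multiplying or dividing by the rank-one positive kernel $(\lam,\mu)\mapsto\bar\mu\lam$ converts between the normalization $(\bar\mu\lam-\de^2)\ip{v^-(\lam)}{v^-(\mu)}$ of Theorem \ref{dpmodel} and the normalization $\bigl(1-\overline{(\de/\mu)}(\de/\lam)\bigr)B(\lam,\mu)$ of \eqref{100} is the whole content of the forward direction, and your lurking-isometry alternative for the reverse direction is precisely the argument the paper itself runs in Theorems \ref{thm3.8} and \ref{finite-extr}. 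The only step you should make explicit if you write this out in full is the last one: passing from the realization $\phi(\lam)=a+\ip{E(\lam)(1-DE(\lam))\inv\gamma}{\beta}$ to the bound $\|\phi(T)\|\le 1$ for every $T\in\fdp(\de)$ is cleanest via the hereditary functional calculus, i.e.\ the kernel identity $1-\overline{\phi(\mu)}\phi(\lam)=\ip{(1-E(\mu)^*E(\lam))u(\lam)}{u(\mu)}$ promotes (since $\sigma(T)\subseteq R_\de$ and $u$ is holomorphic, so $u(T)$ is defined by Riesz--Dunford) to $1-\phi(T)^*\phi(T)=u(T)^*(1-E(T)^*E(T))u(T)\ge 0$, the positivity of $1-E(T)^*E(T)$ being exactly the two conditions $\|T\|\le 1$ and $\|\de T\inv\|\le 1$; this is the step the paper carries out at the end of Theorem \ref{finite-extr} and attributes in Theorem \ref{extr-finite} to \cite[Theorem 9.54]{amy20}.
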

\begin{proof}
 For a proof see Definition 9.44 and Theorem 9.46 in \cite{amy20}. \end{proof}

Recall Moore's Theorem \cite[Theorem 2.5]{amy20}: if $\Omega$ is a set and $A:\Omega \times \Omega \to \c$ is a function, then $A$ is a positive semi-definite function on $\Omega$ if and only if there exists a Hilbert space $\m$ and a function $u:\Omega \to \m$ satisfying
\be\label{110}
A(\lambda,\mu)=\ip{u(\lambda)}{u(\mu)}_\m
\ee
for all $\lambda,\mu \in \Omega$. 
Thus, if $A$ and $B$ are as in equation \eqref{100}, we may choose Hilbert spaces $\m_1$ and $\m_2$ such that 
\[
A(\lambda,\mu)=\ip{u_1(\lambda)}{u_1(\mu)}_{\m_1}\ \text{ and }\ 
B(\lambda,\mu)=\ip{u_2(\lambda)}{u_2(\mu)}_{\m_2}
\] 
for all $\lambda,\mu \in R_\delta$. If we then let $\m=\m_1 \oplus \m_2$ and define $E:R_\delta \to \b(\m)$ and $u:R_\delta \to \m$ by the formulae
\be\label{formE}
E(\lambda)=\begin{bmatrix}\lambda & 0\\ 0 & \frac{\delta}{\lambda}\end{bmatrix}\ \text{ and }\  
u(\lambda)=\begin{bmatrix}u_1(\lambda)\\ u_2(\lambda)\end{bmatrix},\qquad\text{for}\  \lambda \in R_\delta, 
\ee
then the relation \eqref{100} becomes the formula
\be\label{120}
1-\overline{\phi(\mu)}\phi(\lambda) = \Bigip{\big(1-E(\mu)^*E(\lambda)\big)\ u(\lambda)}{u(\mu)}_\m\ 
\text{for}\  \lambda,\mu \in R_\delta.
\ee

When $A$ is positive semi-definite, let us agree to say that $A$ \emph{has finite rank} if $\m$ in the formula \eqref{110} can be chosen to have finite dimension. In this case, we may define $\rank (A)$ by setting
\[
\rank (A) = \dim \m
\]
where $\m$ satisfying  \eqref{110} is chosen to have minimal dimension.\footnote{Equivalently, $\set{u(\lambda)}{\lambda \in \Omega}$ spans $\m$.}

The following theorem is stated as  \cite[Theorem 9.54]{amy20}. For the convenience of the reader we shall give a full proof here. 

\begin{thm}\label{thm3.8} {\rm \bf A realization formula.} Let $\phi \in   \mathcal{S}_{\mathrm {dp}}(R_\de)$.
If $(\m, u)$ is a model for $\phi$ then there exists a unitary operator $L \in \b(\c \oplus \m)$ such that if we decompose $L$ as a block operator matrix
\be\label{130}
L=\begin{bmatrix}a&1\otimes\beta\\
\gamma\otimes 1& D\end{bmatrix},
\ee
where $a \in \c$, $\beta\in \m$, $\gamma \in \m$, and $D \in \b(\m)$, then
\be\label{140} 
\phi(\lambda)=a + \Bigip{E(\lambda)\big(1-DE(\lambda)\big)^{-1}\ \gamma}{\beta}_\m,\qquad \text{for all}\ \lambda \in R_\delta.
\ee
Conversely, if $a \in \c$, $\beta\in \m$, $\gamma \in \m$, and $D \in \b(\m)$ are such that $L$ as defined by equation \eqref{130} is unitary and if $\ph$ is given by equation \eqref{140} and $u:R_\delta \to \m$ is defined by 
\be\label{150}
u(\lambda)=\big(1-DE(\lambda)\big)^{-1}\gamma,\qquad \text{for}\ \lambda \in R_\delta,
\ee
then $(\m,u)$ is a model for $\phi$. 
\end{thm}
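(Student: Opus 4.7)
\medskip

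\noindent\textbf{Proof proposal.} The natural route is a \emph{lurking isometry} argument starting from the model identity \eqref{120}, rearranged so that both sides are manifest Gram matrices of vectors in $\c \oplus \m$.

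\medskip

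First, writing $1-E(\mu)^*E(\lambda) = I - E(\mu)^*E(\lambda)$ and distributing the inner product, I would rearrange \eqref{120} as
\[
1 + \ip{E(\lambda)u(\lambda)}{E(\mu)u(\mu)}_\m
= \overline{\phi(\mu)}\phi(\lambda) + \ip{u(\lambda)}{u(\mu)}_\m,
\]
which is precisely the statement
\[
\Bigip{\begin{bmatrix}1\\ E(\lambda)u(\lambda)\end{bmatrix}}{\begin{bmatrix}1\\ E(\mu)u(\mu)\end{bmatrix}}_{\c\oplus\m}
=
\Bigip{\begin{bmatrix}\phi(\lambda)\\ u(\lambda)\end{bmatrix}}{\begin{bmatrix}\phi(\mu)\\ u(\mu)\end{bmatrix}}_{\c\oplus\m}.
\]
By the usual Gram-matrix-to-isometry principle (Moore's Theorem again), there is a well-defined isometry $V$ from the closed span of the left-hand vectors onto the closed span of the right-hand vectors, determined by
\[
V\begin{bmatrix}1\\ E(\lambda)u(\lambda)\end{bmatrix} = \begin{bmatrix}\phi(\lambda)\\ u(\lambda)\end{bmatrix},\qquad \lambda \in R_\de.
\]
I would then extend $V$ to a unitary $L \in \b(\c\oplus\m)$, enlarging $\m$ (by padding with a defect space and extending $u$ by $0$) if necessary so that the domain and range defects of $V$ in $\c\oplus\m$ are unitarily equivalent; this modification preserves the model property.

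\medskip

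Writing $L$ in the $2\times 2$ block form \eqref{130} with scalar corner $a\in\c$, vectors $\beta,\gamma\in\m$, and $D\in\b(\m)$, the relation $L\bbm 1\\ E(\lambda)u(\lambda)\ebm=\bbm\phi(\lambda)\\ u(\lambda)\ebm$ breaks into the two component equations
\[
a + \ip{E(\lambda)u(\lambda)}{\beta}_\m = \phi(\lambda),\qquad
\gamma + DE(\lambda)u(\lambda) = u(\lambda).
\]
The second equation rearranges to $(1-DE(\lambda))u(\lambda)=\gamma$. Since $L$ is unitary, $\|D\|\le 1$, and because $E(\lambda)=\operatorname{diag}(\lam,\de/\lam)$ has $\|E(\lambda)\|=\max(|\lam|,\de/|\lam|)<1$ for $\lam\in R_\de$, the operator $1-DE(\lambda)$ is invertible on $\m$. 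Hence $u(\lambda)=(1-DE(\lambda))^{-1}\gamma$, giving formula \eqref{150}, and substitution into the first equation yields the realization formula \eqref{140}.

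\medskip

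For the converse I would start from a unitary $L$ of the form \eqref{130} and define $u$ by \eqref{150}; invertibility of $1-DE(\lambda)$ is obtained as above, and $u$ is holomorphic in $\lam$ since $E(\lambda)$ is. Running the calculation in reverse—namely, combining the defining equation $(1-DE(\lambda))u(\lambda)=\gamma$ with \eqref{140} to reconstruct $L\bbm1\\ E(\lambda)u(\lambda)\ebm = \bbm\phi(\lambda)\\ u(\lambda)\ebm$, and then computing Gram matrices using $L^*L=I$—recovers identity \eqref{120}, which is the coordinate form of \eqref{100}. By Theorem \ref{thm.10} this shows $\phi\in\essdp$ and $(\m,u)$ is a model for $\phi$.

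\medskip

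The main obstacle is the extension of $V$ to a genuine unitary on the prescribed space $\c\oplus\m$. Abstractly $V$ extends to a unitary only after adding a defect space, so one must verify that (i) the given model $(\m,u)$ can be replaced by a larger model on $\m\oplus\k$ without changing $\phi$, and (ii) the choice of $\k$ can absorb both defect indices of $V$. This is a routine but essential bookkeeping step; everything else is algebraic manipulation of a block unitary and a geometric series.
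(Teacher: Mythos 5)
Your proposal is correct and follows essentially the same route as the paper: rearrange the model identity into an equality of gramians, invoke the lurking isometry lemma, extend to a unitary on $\c\oplus\m$ after enlarging $\m$ if necessary, read off the two component equations from the block form of $L$, and invert $1-DE(\lambda)$ using $\|D\|\le 1$ and $\|E(\lambda)\|<1$; the converse is likewise the same reversal of the computation via $L^*L=1$. The only cosmetic difference is that the paper begins from the $(\n^\pm,v^\pm)$ form of the model and sets $u(\lambda)=(v^+(\lambda),\lambda v^-(\lambda))$ before reshuffling, whereas you start directly from the equivalent identity \eqref{120}.
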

\begin{proof}
Let $(\m,u)$ be a model for $\ph$.  As explained in Theorem \ref{dpmodel}, it means that there exist Hilbert spaces $\n^+$ and $\n^-$ and maps $v^+: R_\de \to \n^+, v^-:R_\de \to \n^-$ such that, for all $\lam,\mu\in R_\de$,
\[
1- \ov{\ph(\mu)}\ph(\lam) = (1-\overline\mu\lam)\ip{v^+(\lam)}{v^+(\mu)}_{\n^+} + (\overline\mu\lam-\de^2)\ip{v^-(\lam)}{v^-(\mu)}_{\n^-}.
\]
Reshuffle this relation to 
\begin{align*}
1+ &\ip{\lam v^+(\lam)}{\mu v^+(\mu)}_{\n^+} + \ip{\de v^-(\lam)}{\de v^-(\mu)}_{\n^-} \\
&= \ov{\ph(\mu)}\ph(\lam) + \ip{v^+ (\lam)}{v^+ (\mu)}_{\n^+} +\ip{\lam v^- (\lam)}{\mu v^-(\mu)}_{\n^-},
\end{align*}
and notice that this equation amounts to saying that the following families of vectors in $\c\oplus \n$, where $\n \df \n^+ \oplus \n^-$,
\[
\bpm 1 \\ \lam v^+(\lam) \\ \de v^-(\lam) \epm_{\lam \in R_\de} \quad \text{and}\quad \bpm \ph(\lam)\\ v^+(\lam) \\ \lam v^-(\lam) \epm_{\lam\in R_\de}
\]
 have the same gramian.  Let the closed linear spans of these two families be $\x$ and $\y$ respectively.
 By the Lurking Isometry Lemma \cite[Lemma 2.18]{amy20} there exists a linear isometry $L:\x \to \y$ such that
 \be\label{Lprop}
 L\bpm 1 \\ \lam v^+(\lam) \\ \de v^-(\lam) \epm = \bpm \ph(\lam)\\ v^+(\lam) \\ \lam v^-(\lam) \epm
 \ee
 for all $\lam\in R_\de$.  Since both $\x$ and $\y$ are subspaces of $\c\oplus\n$, we may extend $L$ (possibly after enlarging the space $\n$) to a unitary operator  $L:\n \to \n$ (see the discussion in \cite[Remark 2.31]{amy20} for this step).  Write $L$ as a block operator matrix
 \be\label{Lform}
 L \sim \bpm a & 1\otimes \beta \\ \ga\otimes 1& D \epm
 \ee
 with respect to the orthogonal decomposition $\c\oplus(\n^+\oplus\n^-)$ of $\c\oplus\n$ and define a map $u:R_\de \to \n$ by
\[
u(\lam) = \bpm v^+(\lam) \\ \lam v^-(\lam) \epm.
\]
   Then equation \eqref{Lprop} yields the relations
 \begin{align}\label{3.141}
 a + \ip{E(\lam) u(\lam)}{\beta}_{\n} &= \ph(\lam) \\
 \ga + DE(\lam)u(\lam) &= u(\lam),
 \end{align}
 where $E(\lam)$ is given by equation \eqref{formE}.  Since $\|D\|\leq 1$ and 
 $$\|E(\lam)\|= \max{\left\{ |\lam|, \frac{\de}{|\lam|} \right\} } < 1  \ \text{ for all} \  \lam \in R_\de,$$ 
 it follows that $1-DE(\lam)$ is invertible for $\lam\in R_\de$, 
and hence
 \begin{align*}
 u(\lam) &= (1-DE(\lam))\inv\ga, \\
 \ph(\lam)&= a + \ip{E(\lam)(1-DE(\lam))\inv\ga}{\beta}
 \end{align*}
 for all $\lam\in R_\de$, which is the desired realization formula \eqref{140}.
 
 Conversely, suppose that $a,\beta,\gamma, D$  are such that $L$ given by equation \eqref{Lform} is  a unitary operator  on $\c\oplus \n$ and that $\ph$ is the function on $R_\de$ defined by equation \eqref{140}.
 Since $1-DE(\lam)$ is invertible for all $\lam\in R_\de$ we may define a mapping $u:R_\de \to\n$ by equation \eqref{150}.  Then the equations \eqref{3.141} hold.  They may be written in the form
 \[
 L\bbm 1\\E(\lam) u(\lam) \otimes 1 \ebm = \bbm \ph(\lam)\otimes 1 \\ u(\lam)\otimes 1 \ebm \ \text{for}\  \lam\in R_\de.
 \]
 Thus, for any $\mu \in R_\de$,
 \[
  \bbm 1 & 1\otimes E(\mu)u(\mu) \ebm L^* = \bbm 1\otimes \ph(\mu) & 1\otimes u(\mu) \ebm.
 \]
 Multiply the last two displayed equations together and use the fact that $L^*L=1$ to infer that, for any $\lam,\mu\in R_\de$,
 \[
 \bbm 1 & 1\otimes E(\mu)u(\mu) \ebm \bbm 1\\E(\lam) u(\lam) \otimes 1 \ebm = \bbm 1\otimes \ph(\mu) & 1\otimes u(\mu) \ebm \bbm \ph(\lam)\otimes 1 \\ u(\lam)\otimes 1 \ebm,
 \]
 which multiplies out to give the relation, for all $\lam,\mu\in R_\de$,
 \[
 1-\ov{\ph(\mu)}\ph(\lam) = \ip{(1-E(\mu)^*E(\lam))u(\lam)}{u(\mu)}_{\n},
 \]
 that is, $(\n,u)$ is a DP-model of $\ph$. \end{proof}

Let us recall the interpolation problem we posed in the Introduction.

\begin{defin}\label{DPPick}  {\bf The DP Pick Problem.} 
Given $n$ distinct points $\lambda_1,\ldots,\lambda_n$ in $R_\delta$ and $z_1,\ldots,z_n \in \c$, does there exist a function $\phi \in \hinf_{\mathrm {dp}}(R_\de)$ 
with $\norm{\phi}_{\mathrm {dp}} \le 1$ such that 
\be\label{200}
\phi(\lambda_j) = z_j \qquad \text{for} \ j=1,\ldots,n ?
\ee
 
We say the DP Pick Problem \eqref{DPPick} is \emph{solvable} if there exists $\phi \in \essdp$ satisfying equations \eqref{200}.
\end{defin}

The following theorem, which is a Pick interpolation theorem in the dp norm, is \cite[Theorem 9.55]{amy20}.

\begin{thm}\label{Pick-dp} 
Let $\delta \in (0,1)$. Let $\lambda_1, \dots, \lambda_n$ be distinct points in $R_\de$ and let $z_1, \dots, z_n$ be arbitrary complex numbers. There exists $f \in \hinf_{\mathrm {dp}}(R_\de)$ such that
$\|f\|_{\mathrm {dp}} \leq 1$  and
\[ f(\lambda_i) =z_i \;\; \text{for} \;\; i=1, \dots, n, \]
if and only if there exist a pair of $n \times n$ positive semi-definite matrices 
$A = [ a_{ij}]$ and $B= [ b_{ij}]$
such that 
\[
1-\overline{z_i} z_j =(1-\overline \lambda_i \lambda_j)a_{ij}
 + (1 -\frac{\de^2}{\overline \lambda_i \lambda_j})b_{ij}
\]
for $i,j=1, \dots, n$.

\end{thm}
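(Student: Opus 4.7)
The plan is to prove both directions by leveraging the already-developed model/realization machinery: Theorem \ref{thm.10} for the inner-product formulation of $\essdp$, Moore's theorem to convert positive semi-definite matrices into Gramians, the Lurking Isometry Lemma to build an interpolating operator, and finally Theorem \ref{thm3.8} to produce the holomorphic function.

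Necessity is almost immediate. Suppose $f\in\essdp$ interpolates the data. Apply Theorem \ref{thm.10} to obtain positive semi-definite functions $A,B$ on $R_\de$ satisfying \eqref{100}. Set $a_{ij}=A(\lambda_j,\lambda_i)$ and $b_{ij}=B(\lambda_j,\lambda_i)$. Then $[a_{ij}]$ and $[b_{ij}]$ are Hermitian positive semi-definite $n\times n$ matrices, and substituting $\lambda=\lambda_j,\mu=\lambda_i$ into \eqref{100}, together with the algebraic identity $1-\overline{\de/\lambda_i}\cdot\de/\lambda_j=1-\de^2/(\bar\lambda_i\lambda_j)$, yields the required scalar identity.

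For sufficiency, apply Moore's theorem to the given matrices to obtain finite-dimensional Hilbert spaces $\m_1,\m_2$ and vectors $u_k(\lambda_j)\in\m_k$ with $a_{ij}=\ip{u_1(\lambda_j)}{u_1(\lambda_i)}_{\m_1}$ and $b_{ij}=\ip{u_2(\lambda_j)}{u_2(\lambda_i)}_{\m_2}$. Set $\m=\m_1\oplus\m_2$, $u(\lambda_j)=u_1(\lambda_j)\oplus u_2(\lambda_j)$, and let $E(\lambda)$ be as in \eqref{formE}. Since $1-E(\lambda_i)^*E(\lambda_j)=\mathrm{diag}(1-\bar\lambda_i\lambda_j,\,1-\de^2/(\bar\lambda_i\lambda_j))$, the hypothesis rewrites as
\[
1-\overline{z_i}z_j=\Bigip{(1-E(\lambda_i)^*E(\lambda_j))u(\lambda_j)}{u(\lambda_i)}_{\m},
\]
which rearranges to the equality of Gramians
\[
\Bigip{\bpm 1\\ E(\lambda_j)u(\lambda_j)\epm}{\bpm 1\\ E(\lambda_i)u(\lambda_i)\epm}_{\c\oplus\m}=\Bigip{\bpm z_j\\ u(\lambda_j)\epm}{\bpm z_i\\ u(\lambda_i)\epm}_{\c\oplus\m}
\]
for $i,j=1,\dots,n$.

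By the Lurking Isometry Lemma \cite[Lemma 2.18]{amy20}, there is a linear isometry $L_0$ from the span of the first family onto the span of the second, sending each $(1,E(\lambda_j)u(\lambda_j))^{\top}$ to $(z_j,u(\lambda_j))^{\top}$. After enlarging $\m$ to a Hilbert space $\n\supseteq\m$ as in \cite[Remark 2.31]{amy20}, $L_0$ extends to a unitary $L$ on $\c\oplus\n$. Decompose $L$ in block form as in \eqref{130}. The converse half of Theorem \ref{thm3.8} then guarantees that the function $\phi(\lambda)=a+\ip{E(\lambda)(1-DE(\lambda))\inv\gamma}{\beta}_{\n}$ belongs to $\essdp$, with $(\n,u)$ extended to all of $R_\de$ by the formula $u(\lambda)=(1-DE(\lambda))\inv\gamma$. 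At the nodes, the two scalar relations extracted from $L(1,E(\lambda_j)u(\lambda_j))^{\top}=(z_j,u(\lambda_j))^{\top}$ give $u(\lambda_j)=(1-DE(\lambda_j))\inv\gamma$ and $\phi(\lambda_j)=z_j$, so $\phi$ interpolates as required.

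The main obstacle is the step of extending the Lurking Isometry $L_0$ to a unitary $L$ preserving the block structure demanded by Theorem \ref{thm3.8}: one must arrange that the distinguished first coordinate of $\c\oplus\m$ is preserved on both sides of $L$, so that the decomposition \eqref{130} produces a genuine realization. Once this is handled—which is standard and purely formal once one allows $\m$ to be enlarged—the proof is a direct finite-dimensional translation of the functional argument behind Theorem \ref{thm3.8}.
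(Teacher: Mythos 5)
Your proof is correct. Note that the paper does not actually prove Theorem \ref{Pick-dp}: it simply cites \cite[Theorem 9.55]{amy20}. Your necessity argument is the obvious restriction of Theorem \ref{thm.10} to the nodes, and your sufficiency argument (Moore's theorem, lurking isometry, converse half of Theorem \ref{thm3.8}) is essentially verbatim the argument the paper itself gives later for Theorem \ref{extr-finite}; in particular the block-structure worry you flag is vacuous here, since in finite dimensions the two spans have equal codimension in $\c\oplus\m_1\oplus\m_2$ and $L_0$ extends to a unitary of that space directly, exactly as in the paper's proof of Theorem \ref{extr-finite}. The only cosmetic point is that the converse half of Theorem \ref{thm3.8} yields a model for $\phi$, from which membership in $\essdp$ follows by the ``if'' direction of Theorem \ref{thm.10} (equivalently \cite[Theorem 9.54]{amy20}); you should cite that implication explicitly rather than attributing it to Theorem \ref{thm3.8} alone.
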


We also assert a dual theorem in terms of ``DP Szeg\H{o} kernels", which we discuss in the next section.

\section{DP-Szeg\H{o} kernels and normalized DP-Szeg\H{o} kernels for the tuple $(\lam_1,\dots,\lam_n)$}\label{dpk}

In this section we follow Abrahamse's idea of using families of kernels to solve Pick interpolation problems. 
To this end we shall introduce several objects that depend on an $n$-tuple $\lam=(\lam_1,\dots,\lam_n)$ of distinct points in $R_\de$.  First we consider the set $\fdp(\delta,\lambda)$ of operators   on $n$-dimensional Hilbert space  with spectrum $\{\lam_1,\dots,\lam_n\}$ which belong to the Douglas-Paulsen family $\fdp(\delta)$. Secondly we define DP Szeg\H{o} kernels for the $n$-tuple $\lambda$.
We establish a close connection between these two objects in Propositions \ref{T-g-G(lam)} and \ref{g-to-T-DP}. Thereby, in Section \ref{dp-Pick} we shall  establish a theorem analogous to Theorem  \ref{Abrahamse}, Abrahamse's Theorem.

\begin{defin} \label{dpkernel}
We say that a kernel $k:R_\delta \times R_\delta$ is a \emph{DP Szeg\H{o} kernel on $R_\de$} if
\be\label{300}
[(1-\overline\mu \lambda)k(\lambda,\mu)]\ge 0\ \ \text{ and }\ \ 
[(1-\frac{\delta}{\overline{\mu}} \frac{\delta}{\lambda})k(\lambda,\mu)] \ge 0,\qquad \text{for all} \; \lambda,\mu \in R_\delta.
\ee
We let
\[
\calk =\set{k}{k \text{ is a DP Szeg\H{o} kernel on $R_\de$}}.
\]
\end{defin}

\begin{defin}\label{T-D-P-lam}
Let $\lambda=(\lambda_1,\ldots,\lambda_n)$ be an $n$-tuple of distinct points in $R_\delta$. We denote by $\fdp(\delta,\lambda)$ the family of operators $T$ in the Douglas-Paulsen family  $\f_{\mathrm{dp}}(\de)$ corresponding to the annulus $R_\de$
 that act on an $n$-dimensional Hilbert space  $\h_T$ and satisfy 
\[
\sigma(T)=\{\lambda_1,\ldots,\lambda_n\}.
\]
\end{defin}

If $T\in \fdp(\delta,\lambda)$, then, as $\dim \h_T=n$ and $\sigma(T)$ consists of $n$ distinct points, $T$ is diagonalizable, that is, there exist $n$ linearly independent vectors $e_1,\ldots,e_n \in \h_T$ such that
\be \label{T-e}
Te_j=\lambda_j e_j \qquad \text{for} \ j=1,\ldots,n.
\ee
Let $g$ denote the gramian of the vectors $e_1,\ldots,e_n$, that is,
\be \label{g-of-T}
g=[g_{ij}], \text{ where } g_{ij}=\ip{e_j}{e_i}\qquad \text{for} \ i,j=1,\ldots,n, 
\ee
Then, we shall prove in Proposition \ref{T-g-G(lam)} that  $g=[g_{ij}]$ is a positive definite $n \times n$ matrix such that 
\be\label{210-0}
[(1-\overline\lambda_i \lambda_j)g_{ij}] \ge 0\ \ \text{ and }\ \ 
\left[\left(1-\frac{\delta}{\overline{\lambda_i}} \frac{\delta}{\lambda_j}\right)g_{ij}\right] \ge 0.
\ee

\begin{defin}\label{dp-kernels}
Let $\lambda_1, \dots, \lambda_n$ be $n$ distinct points in $R_\de$.
We define $\mathcal{G}_{\mathrm {dp}} (\lambda)$ to be the set of positive definite $n \times n$ matrices $g = [g_{ij}]$ such that 
\be\label{210}
[(1-\overline\lambda_i \lambda_j)g_{ij}] \ge 0\ \ \text{ and }\ \ 
\left[\left(1-\frac{\delta}{\overline{\lambda_i}} \frac{\delta}{\lambda_j}\right)g_{ij}\right] \ge 0.
\ee
We call $g \in \mathcal{G}_{\mathrm {dp}} (\lambda)$ a  {\em DP-Szeg\H{o} kernel} for the $n$-tuple $\lam=(\lam_1,\dots,\lam_n)$. \\
\end{defin}

\begin{prop} \label{T-g-G(lam)} Let $\lambda_1, \dots, \lambda_n$ be $n$ distinct points in $R_\de$.
Let  $T\in \fdp(\delta,\lambda)$. Then the  gramian $g=[g_{ij}]$ of vectors $e_1,\ldots,e_n$ that satisfy the equations \eqref{T-e} and \eqref{g-of-T}
is a positive definite $n \times n$ matrix which belongs to $\mathcal{G}_{\mathrm {dp}} (\lambda)$.
\end{prop}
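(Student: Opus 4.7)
The plan is to prove the proposition via a direct computation that converts the two operator inequalities defining the Douglas--Paulsen class ($\|T\|\le 1$ and $\|T\inv\|\le 1/\de$) into the two matrix inequalities required for membership in $\g_\mathrm{dp}(\lam)$.

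First, I would observe that since $T$ acts on an $n$-dimensional Hilbert space with $\sigma(T)=\{\lam_1,\dots,\lam_n\}$ (all distinct), $T$ is diagonalizable, so the eigenvectors $e_1,\dots,e_n$ satisfying $Te_j=\lam_j e_j$ form a basis of $\h_T$. Consequently their gramian $g=[\ip{e_j}{e_i}]$ is automatically positive definite, taking care of the positive definiteness clause of Definition \ref{dp-kernels}.

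Next I would establish the first inequality in \eqref{210-0}. Since $\|T\|\le 1$, we have $I-T^*T\ge 0$ on $\h_T$. For any $c_1,\dots,c_n\in\c$, set $x=\sum_j c_j e_j\in\h_T$; then
\[
0\le \ip{(I-T^*T)x}{x} = \sum_{i,j}\bar c_i c_j\bigl(\ip{e_j}{e_i}-\ip{Te_j}{Te_i}\bigr) = \sum_{i,j}\bar c_i c_j(1-\bar\lam_i\lam_j)g_{ij},
\]
which is precisely the statement that $[(1-\bar\lam_i\lam_j)g_{ij}]\ge 0$.

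For the second inequality, I use that $\|T\inv\|\le 1/\de$, i.e., $\de^2(T\inv)^*T\inv\le I$, together with $T\inv e_j=\lam_j\inv e_j$. The same test $x=\sum_j c_j e_j$ gives
\[
0\le \ip{(I-\de^2(T\inv)^*T\inv)x}{x} = \sum_{i,j}\bar c_i c_j\!\left(1-\frac{\de^2}{\bar\lam_i\lam_j}\right)g_{ij},
\]
which is $\bigl[\bigl(1-\tfrac{\de}{\bar\lam_i}\cdot\tfrac{\de}{\lam_j}\bigr)g_{ij}\bigr]\ge 0$, as required. So $g\in \g_\mathrm{dp}(\lam)$. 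There is no serious obstacle here; the only point to watch is correctly relating the operator inequality $\de^2(T\inv)^*T\inv\le I$ to the scalar factor $1-\de^2/(\bar\lam_i\lam_j)$, and rewriting this factor in the symmetric form $1-\overline{(\de/\lam_i)}(\de/\lam_j)$ used in Definition \ref{dp-kernels}.
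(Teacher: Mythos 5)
Your proof is correct and follows essentially the same route as the paper: both arguments test the operator inequalities $\|T\|\le 1$ and $\|\de T\inv\|\le 1$ against vectors $x=\sum_j c_j e_j$ expanded in the eigenbasis, and read off the two defect inequalities of Definition \ref{dp-kernels} from the resulting quadratic forms. Your explicit remark that linear independence of the eigenvectors gives positive definiteness of the gramian is a small point the paper leaves implicit, but otherwise the two proofs coincide.
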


\begin{proof} By assumption $T$ is a  Douglas-Paulsen operator  with parameter $\de$ that acts on an $n$-dimensional Hilbert space  $\h_T$, $T$ has $n$ linearly independent eigenvectors $e_1,\dots,e_n$ corresponding to the eigenvalues $\lam_1,\dots,\lam_n$ respectively and $g_{ij} = \ip{e_j}{e_i}$ for $i,j=1,\dots,n$.
By the definition of the Douglas-Paulsen class, $\norm{T} \le 1$ and $\norm{\delta T\inv} \le 1$, so that, for any vector $x=\sum_{j=1}^n x_j e_j$, we have
\begin{align*}
0 & \leq \|x\|^2 - \|Tx\|^2 \\
&= \ip{\sum_{j=1}^n x_j e_j}{\sum_{i=1}^n x_i e_i} -  \ip{\sum_{j=1}^n x_j\lam_j e_j}{\sum_{i=1}^n \lam_i x_i e_i}  \\
&= \sum_{i,j=1}^n\ov{x_i}\left((1-\ov{\lam_i}\lam_j)g_{ij}\right)x_j \\
&= \bbm \ov{x_1} & \dots & \ov{x_n} \ebm \bbm (1-\ov{\lam_i}\lam_j)g_{ij} \ebm \bbm x_1 \\\dots \\x_n \ebm.
\end{align*}
Thus
\[
\big[(1-\overline\lambda_i\lambda_j)g_{ij}\big]_{i,j=1}^n \ge 0. \ 
\]
Likewise, the relation $\|\delta T\inv x\| \le \|x\|$ holds for any vector $x=\sum_{j=1}^n x_j e_j \in  \h_T$. Therefore, we have
\begin{align*}
0 & \leq \|x\|^2 - \left\|\delta T\inv x\right\|^2 \\
&= \ip{\sum_{j=1}^n x_j e_j}{\sum_{i=1}^n x_i e_i} -  \ip{ \sum_{j=1}^n \frac{\de}{\lam_j} x_j e_j }{ \sum_{i=1}^n  \frac{\de}{\lam_i} x_i e_i }  \\
&= \sum_{i,j=1}^n\ov{x_i}\left(\left(1-\frac{\de}{\ov{\lam_i}}\frac{\de}{\lam_j}\right)g_{ij}\right)x_j \\
&= \bbm \ov{x_1} & \dots & \ov{x_n} \ebm \bbm (1-{\frac{\de}{\ov{\lambda_i}}} \frac{\de}{\lambda_j})
g_{ij} \ebm \bbm x_1 \\\dots \\x_n \ebm.
\end{align*}
Thus
\[
\big[\left(1-{\frac{\de}{\ov{\lambda_i}}} \frac{\de}{\lambda_j}\right) g_{ij}\big]_{i,j=1}^n \ge 0.
\]
Therefore, $g=[g_{ij}]$ is a positive definite DP-Szeg\H{o} kernel for the $n$-tuple $\lam=(\lam_1,\dots,\lam_n)$. \end{proof}

Let $g \in \mathcal{G}_{\mathrm {dp}} (\lambda)$, so that $g > 0$. By Moore's theorem \cite[Theorem 2.5]{amy20}, $g$ is the gramian  matrix of a basis $e_1, \dots, e_n$ of an $n$-dimensional Hilbert space ${\mathcal H}$.

\begin{prop} \label{g-to-T-DP} Let $\lambda_1, \dots, \lambda_n$ be $n$ distinct points in $R_\de$.
Let $g \in \mathcal{G}_{\mathrm {dp}} (\lambda)$. Let $g$ be the gramian  matrix of a basis $e_1, \dots, e_n$ of an $n$-dimensional Hilbert space ${\mathcal H}$. Define $T \in \b(\h)$ by 
\be \label{defT}
T e_j= \lambda_j e_j, \ j =1, \dots, n.
\ee
Then 
$T \in \f_{\mathrm{dp}}(\delta,\lambda)$.
\end{prop}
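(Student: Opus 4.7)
The plan is to reverse the computation carried out in the proof of Proposition \ref{T-g-G(lam)}: the two matrix positivity conditions that define membership in $\mathcal{G}_{\mathrm{dp}}(\lambda)$ encode precisely the inequalities $\|T\|\le 1$ and $\|\delta T^{-1}\|\le 1$ once $T$ is expressed in the basis $e_1,\dots,e_n$ whose gramian is $g$.

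First I would check that $T$ is well-defined, invertible, and has the correct spectrum. Since $g$ is positive definite, the vectors $e_1,\dots,e_n$ supplied by Moore's theorem are linearly independent, hence form a basis of $\mathcal{H}$, so the formula $Te_j=\lambda_j e_j$ uniquely determines an operator $T\in\mathcal{B}(\mathcal{H})$ with $\sigma(T)=\{\lambda_1,\dots,\lambda_n\}\subseteq R_\delta$. Because $0\notin R_\delta$, every $\lambda_j$ is nonzero and $T$ is invertible with $T^{-1}e_j=\lambda_j^{-1}e_j$.

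The core calculation is the standard Gram-matrix identity: for any $x=\sum_j x_j e_j\in\mathcal{H}$, expanding in the basis gives
\[
\|x\|^2-\|Tx\|^2 \;=\; \sum_{i,j=1}^n\overline{x_i}\,(1-\bar\lambda_i\lambda_j)\,g_{ij}\,x_j,
\]
which is non-negative by the first positivity condition of \eqref{210}, so $\|T\|\le 1$. The identical computation with $T$ replaced by $\delta T^{-1}$ (and correspondingly $\lambda_j$ replaced by $\delta/\lambda_j$) produces
\[
\|x\|^2-\|\delta T^{-1}x\|^2 \;=\; \sum_{i,j=1}^n\overline{x_i}\left(1-\tfrac{\delta}{\bar\lambda_i}\tfrac{\delta}{\lambda_j}\right)g_{ij}\,x_j \;\ge\; 0
\]
by the second positivity condition, so $\|T^{-1}\|\le 1/\delta$. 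Combining these two bounds with $\sigma(T)\subseteq R_\delta$ is exactly the definition of $T\in\mathcal{F}_{\mathrm{dp}}(\delta,\lambda)$.

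I do not anticipate a genuine obstacle: the proposition is a direct converse to Proposition \ref{T-g-G(lam)} and the same pair of quadratic-form identities delivers both implications. The only points warranting a moment's care are that $g>0$ really does force the $e_j$ to be a basis (so that $T$ is unambiguously defined) and that the membership $\lambda_j\in R_\delta$ ensures invertibility of $T$ before one invokes the second positivity condition.
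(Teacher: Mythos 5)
Your proposal is correct and follows essentially the same route as the paper's proof: expanding $\|x\|^2-\|Tx\|^2$ and $\|x\|^2-\|\delta T^{-1}x\|^2$ as quadratic forms in the gramian and invoking the two positivity conditions in \eqref{210}. The preliminary checks you flag (that $g>0$ forces $e_1,\dots,e_n$ to be a basis, and that $\lambda_j\in R_\delta$ gives invertibility and the spectrum condition) are exactly the points the paper also relies on.
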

\begin{proof}
Let us show that $T$ is a Douglas-Paulsen operator. 
If 
$ x= \sum_{j=1}^n \xi_j e_j \in \h $, $ Tx = \sum_{j=1}^n \xi_j \lambda_j e_j$ and
\begin{align}\label{T-lambda}
\|Tx\|^2 &= \big\langle\, \sum_{j=1}^n \xi_j \lambda_j e_j ,
\sum_{i=1}^n \xi_i \lambda_i e_i\, \big\rangle \notag\\
&=\sum_{j,i=1}^n \xi_j \lambda_j \overline\xi_i \overline\lambda_i \langle\, e_j,e_i \,\rangle \notag\\
&=\sum_{j,i=1}^n \overline\lambda_i \lambda_j  \xi_j \overline\xi_i  \, g_{ij}.
\end{align}
Hence
\[
\|x\|^2 - \|Tx\|^2 =\sum_{j,i=1}^n (1 - \overline\lambda_i \lambda_j)  \, g_{ij} \xi_j  \overline\xi_i .
\]
By hypothesis, 
\[ [(1-\overline\lambda_i \lambda_j)g_{ij}] \ge 0, \]
and so $\|x\|^2 - \|Tx\|^2 \ge 0$. Thus $\|T\|  \le 1$.
Similarly, using the hypothesis 
\[
\left[\left(1-\frac{\delta}{\overline{\lambda_i}} \frac{\delta}{\lambda_j}\right)g_{ij}\right] \ge 0,
\]
one can show that $\|\delta T^{-1}\|  \le 1$. Therefore 
$T$ is a Douglas-Paulsen operator.
In addition, by the definition \eqref{defT} of $T$ ,
\[
\sigma(T)=\{\lambda_1,\ldots,\lambda_n\} \subset R_\de.
\]
thus $T \in \f_{\mathrm{dp}}(\delta,\lambda)$. \end{proof}

\begin{prop} \label{dp-kernel-z} Let  $\lambda_1,\ldots,\lambda_n$ be $n$ distinct points in $R_\delta$ and $z_1,\ldots,z_n \in \c$.
If the  DP Pick Problem \ref{DPPick} is solvable, then, for any positive definite
$g \in \mathcal{G}_{\mathrm {dp}} (\lambda)$,
\be\label{211}
[(1-\overline z_i z_j)g_{ij}] \ge 0.
\ee
\end{prop}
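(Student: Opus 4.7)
The plan is to reduce this to the operator-theoretic definition of the dp-norm via the correspondence between DP Szegő kernels and Douglas–Paulsen operators established in Proposition \ref{g-to-T-DP}. Given a positive definite $g \in \mathcal{G}_{\mathrm{dp}}(\lambda)$, I would first apply Moore's theorem to realize $g = [\ip{e_j}{e_i}]_{i,j=1}^n$ as the gramian of a basis $e_1,\dots,e_n$ of an $n$-dimensional Hilbert space $\h$. Proposition \ref{g-to-T-DP} then produces the diagonalizable operator $T \in \fdp(\de,\lam)$ defined by $Te_j = \lam_j e_j$. This step is essentially free since it has already been proved in the previous proposition, and it converts the kernel inequality statement into a statement about a concrete operator.

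Next, suppose $\phi \in \essdp$ solves the Pick problem, so $\phi(\lam_j) = z_j$ for each $j$ and $\|\phi\|_{\mathrm{dp}} \le 1$. By the very definition \eqref{dpnorm} of the dp-norm and the fact that $T \in \fdp(\de)$, we get $\|\phi(T)\| \le 1$. Since $T$ is diagonalizable on $\{e_1,\dots,e_n\}$ with eigenvalues $\lam_1,\dots,\lam_n$, the Riesz–Dunford functional calculus yields
\[
\phi(T) e_j = \phi(\lam_j) e_j = z_j e_j, \qquad j=1,\dots,n.
\]
Thus the $e_j$ are eigenvectors of $\phi(T)$ with eigenvalues $z_j$.

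The final step is a direct gramian computation. For any $x = \sum_{j=1}^n \xi_j e_j \in \h$ we have $\phi(T) x = \sum_{j=1}^n \xi_j z_j e_j$, and therefore
\[
\|x\|^2 - \|\phi(T) x\|^2 = \sum_{i,j=1}^n \ov{\xi_i}\,\xi_j\,(1-\ov{z_i}z_j)g_{ij}.
\]
The left side is nonnegative because $\|\phi(T)\|\le 1$, so the Hermitian matrix $[(1-\ov{z_i}z_j)g_{ij}]$ is positive semi-definite, which is exactly \eqref{211}.

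I do not anticipate a genuine obstacle: all the work has been front-loaded into Proposition \ref{g-to-T-DP} and into the operator-theoretic definition of $\|\cdot\|_{\mathrm{dp}}$. The only thing to watch is that one uses the holomorphic functional calculus correctly to assert $\phi(T) e_j = z_j e_j$ (which is immediate because $T$ has simple spectrum consisting of the $\lam_j$ and eigenvectors $e_j$); the rest is a one-line quadratic-form computation.
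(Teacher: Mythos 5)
Your proposal is correct and follows essentially the same route as the paper's own proof: realize $g$ as a gramian via Moore's theorem, invoke Proposition \ref{g-to-T-DP} to get $T\in\fdp(\de,\lam)$, use $\|\phi(T)\|\le 1$ together with $\phi(T)e_j=z_je_j$, and conclude by the quadratic-form computation. No gaps.
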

\begin{proof}
By assumption, the  DP Pick Problem \ref{DPPick} is solvable, that is, 
there exist a function $\phi \in \hinf_{\mathrm {dp}}(R_\de)$ 
with $\norm{\phi}_{\mathrm {dp}} \le 1$ and satisfying 
\be\label{220}
\phi(\lambda_j) = z_j,\qquad j=1,\ldots,n.
\ee
Let $g \in \mathcal{G}_{\mathrm {dp}} (\lambda)$, and so $g > 0$. By Moore's theorem \cite[Theorem 2.5]{amy20}, $g$ is the gramian  matrix of a basis $e_1, \dots, e_n$ of an $n$-dimensional Hilbert space ${\mathcal H}$. Define $T \in \b(\h)$ by 
\be \label{defT1}
T e_j= \lambda_j e_j, \ j =1, \dots, n.
\ee
By Proposition \ref{g-to-T-DP}, $T \in \f_{\mathrm{dp}}(\delta,\lambda)$.
By assumption,
$\phi \in \essdp$, and so $\|\phi(T)\| \le 1$. 
For any 
$ x= \sum_{j=1}^n \xi_j e_j \in \h $,
\begin{align}\label{phiT-lambda}
\phi(T) x & = \phi(T)\sum_{j=1}^n \xi_j e_j \notag\\
&=\sum_{j=1}^n \xi_j \phi(\lambda_j) e_j = \sum_{j=1}^n \xi_j z_j e_j.
\end{align}
Therefore, by  equation \eqref{phiT-lambda}, the condition $\|\phi(T)\| \le 1$
translates into 
\[
[(1-\overline z_i z_j)g_{ij}] \ge 0. 
\qedhere
\]
\end{proof}

\begin{defin}\label{norm-dp-kernels}
Let $\lambda_1, \dots, \lambda_n$ be $n$ distinct points in $R_\de$.
We say that a DP-Szeg\H{o} kernel $[g_{ij}] \in \mathcal{G}_{\mathrm {dp}} (\lambda)$ is {\em normalized} if $g_{ii}=1$ for $i=1,\dots,n$. \\ 
Let $\mathcal{G}_{\mathrm {dp}}^{\mathrm{norm}} (\lambda)$ denote the set of normalized DP-Szeg\H{o} kernels for the $n$-tuple $(\lam_1,\dots,\lam_n)$.
\end{defin}

\begin{rem}\label{norm-kernel} Let $\lambda_1, \dots, \lambda_n$ be $n$ distinct points in $R_\de$.
Every DP-Szeg\H{o} kernel $[g_{ij}]$ from $\mathcal{G}_{\mathrm {dp}} (\lambda)$ is diagonally congruent to a normalized DP-Szeg\H{o} kernel.
\end{rem}
\begin{proof} For any matrix $[g_{ij}]\in \mathcal{G}_{\mathrm {dp}} (\lambda)$, we can define a positive definite matrix $[h_{ij}]$ by 
\be\label{defh}
h_{ii}=1\ \text{for}\ i=1,\dots,n \ \text{and}\ h_{ij}= c_i\inv g_{ij} c_j\inv\ \text{if}\ i\neq j
\ee
 where 
\be\label{defci}
c_i = \sqrt{g_{ii}}\  \text{if}\  g_{ii}\neq 0\ \text{and} \ c_i=1\  \text{if}\ g_{ii}=0.
\ee  
Then $h_{ii}=1$ for each $i$, and 
\be \label{defC}
\bbm h_{ij} \ebm_{i,j=1}^n = C^*\bbm g_{ij} \ebm_{i,j=1}^n C \ \text{where}\ C= \mathrm{diag} \{1/c_1, \dots, 1/c_n\}.
\ee
On conjugating the inequalities \eqref{210} by the matrix $C$ we find that $[h_{ij}]$ belongs to $\mathcal{G}_{\mathrm {dp}}^{\mathrm{norm}} (\lambda)$. \end{proof}

\begin{prop} \label{dpcompact} Let $\lambda_1, \dots, \lambda_n$ be $n$ distinct points in $R_\de$.
The set $\mathcal{G}_{\mathrm {dp}}^{\mathrm{norm}} (\lambda)$ is compact in the topology of the space of $n\times n$ complex matrices. Moreover, for fixed target data $z_1,\dots,z_n$,
\be \label{more}
\bbm (1-\overline z_i z_j) g_{ij} \ebm_{i,j=1}^n \geq 0\  \text{for all}\ g\in \mathcal{G}_{\mathrm {dp}} (\lambda)
\ee
if and only if
\be \label{less}
\bbm (1-\overline z_i z_j) g_{ij} \ebm_{i,j=1}^n \geq 0\  \text{for all}\ g\in \mathcal{G}_{\mathrm {dp}}^{\mathrm{norm}} (\lambda).
\ee
\end{prop}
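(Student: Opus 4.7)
The plan is to establish the two assertions separately by short direct arguments.

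For the compactness of $\mathcal{G}_{\mathrm{dp}}^{\mathrm{norm}}(\lambda)$, I would show that the set is closed and bounded in the Euclidean topology on $\mathbb{C}^{n\times n}$. Boundedness is immediate: the normalization $g_{ii}=1$ together with positive semi-definiteness of $[g_{ij}]$ forces each $2\times 2$ principal submatrix $\bbm 1 & g_{ij}\\ \bar g_{ij} & 1 \ebm$ to have non-negative determinant, so $|g_{ij}|\le 1$ for all $i,j$. Closedness follows because each defining condition --- namely $g_{ii}=1$ for every $i$, positive semi-definiteness of $[g_{ij}]$, and the two matrix inequalities \eqref{210} --- is a closed condition on the entries of the matrix, so their intersection is closed, and therefore compact.

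For the equivalence of \eqref{more} and \eqref{less}, the implication \eqref{more}$\Rightarrow$\eqref{less} is immediate from the inclusion $\mathcal{G}_{\mathrm{dp}}^{\mathrm{norm}}(\lambda) \subseteq \mathcal{G}_{\mathrm{dp}}(\lambda)$. For the reverse implication, assume \eqref{less} holds and fix an arbitrary $g \in \mathcal{G}_{\mathrm{dp}}(\lambda)$. By Remark \ref{norm-kernel}, setting $c_i=\sqrt{g_{ii}}$ and $C=\mathrm{diag}(1/c_1,\dots,1/c_n)$ produces $h \df C[g_{ij}]C \in \mathcal{G}_{\mathrm{dp}}^{\mathrm{norm}}(\lambda)$. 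Writing $D = C^{-1} = \mathrm{diag}(c_1,\dots,c_n)$ and unwinding entrywise, one has $g_{ij}=c_i h_{ij} c_j$, so
\[
[(1-\bar z_i z_j)g_{ij}] \;=\; D\,[(1-\bar z_i z_j)h_{ij}]\,D.
\]
Since $D$ is a real invertible diagonal matrix, congruence by $D$ preserves positive semi-definiteness, and the hypothesis \eqref{less} applied to $h$ yields $[(1-\bar z_i z_j)h_{ij}]\ge 0$, whence $[(1-\bar z_i z_j)g_{ij}]\ge 0$, establishing \eqref{more}.

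I do not anticipate any substantial obstacle. The heart of the argument is the single observation that the diagonal renormalization of a positive definite matrix commutes, in the congruence sense, with the entrywise weighting by $(1-\bar z_i z_j)$, which reduces the equivalence to the invariance of positive semi-definiteness under congruence by an invertible matrix.
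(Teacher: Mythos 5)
Your argument for the equivalence of \eqref{more} and \eqref{less} is correct and is essentially the paper's own: diagonal renormalization via Remark \ref{norm-kernel} followed by congruence with the invertible diagonal matrix $\mathrm{diag}(c_1,\dots,c_n)$ (note only that $c_i=\sqrt{g_{ii}}>0$ automatically here, since $g$ is positive definite).

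The compactness argument, however, has a genuine gap. By Definition \ref{dp-kernels} the set $\mathcal{G}_{\mathrm{dp}}^{\mathrm{norm}}(\lambda)$ consists of positive \emph{definite} matrices, and positive definiteness is an \emph{open} condition, not a closed one; your list of ``closed defining conditions'' silently replaces it by positive semi-definiteness. A limit of matrices in $\mathcal{G}_{\mathrm{dp}}^{\mathrm{norm}}(\lambda)$ is a priori only positive semi-definite and could be singular, in which case it would not belong to $\mathcal{G}_{\mathrm{dp}}^{\mathrm{norm}}(\lambda)$; ruling this out is precisely the content of closedness. The paper closes the gap operator-theoretically: each $g^\ell$ is realized as the gramian of unit eigenvectors $e_j^\ell$ of an operator $T^\ell\in\fdp(\delta,\lambda)$; after passing to subsequences the vectors converge to unit vectors $v_j$ and the operators to some $T$ with $Tv_j=\lambda_j v_j$, and since the $\lambda_j$ are distinct the $v_j$ are linearly independent, so the limit gramian is nonsingular and Proposition \ref{T-g-G(lam)} places it in $\mathcal{G}_{\mathrm{dp}}^{\mathrm{norm}}(\lambda)$. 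A purely matricial repair is also possible: one can show that a normalized positive semi-definite $g$ satisfying the first inequality in \eqref{210} is automatically nonsingular, since writing $g_{ij}=\ip{e_j}{e_i}$ with $\|e_j\|=1$ the inequality gives $\|\sum_j\xi_j\lambda_j e_j\|\le\|\sum_j\xi_j e_j\|$ for all $\xi$, so $g\xi=0$ forces $\sum_j\xi_j\lambda_j^k e_j=0$ for every $k\ge 0$, and a Vandermonde argument with the distinct $\lambda_j$ yields $\xi=0$. Either way, an additional argument is required; as written your proof does not establish closedness.
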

\begin{proof}
Consider any matrix $g=[g_{ij}] \in \mathcal{G}_{\mathrm {dp}}^{\mathrm{norm}} (\lambda)$.  Since $g$ is positive definite, the principal minor on rows $i$ and $j$ is non-negative, which is to say that $1-|g_{ij}|^2 \geq 0$ for $i,j=1,\dots,n$.  It follows that the operator norm $\|g\| \leq n$, and so $\mathcal{G}_{\mathrm {dp}}^{\mathrm{norm}} (\lambda)$ is bounded.  Let us prove that $\mathcal{G}_{\mathrm {dp}}^{\mathrm{norm}} (\lambda)$ is sequentially compact.

Let $g^\ell$, $\ell =1,2, \dots$, be a sequence in $\mathcal{G}_{\mathrm {dp}}^{\mathrm{norm}} (\lambda)$.
We claim that $(g^\ell)_{\ell \ge 1}$ has a subsequence that converges to an element of $\mathcal{G}_{\mathrm {dp}}^{\mathrm{norm}} (\lambda)$. For each $\ell$, since $g^\ell$ is non-singular, by the definition of $\mathcal{G}_{\mathrm {dp}}^{\mathrm{norm}} (\lambda)$, we may pick a basis $e_1^\ell, \dots, e_n^\ell$ of $\c^n$ such that $g^\ell$ is the gramian  matrix of the basis $e_1^\ell, \dots, e_n^\ell$, which is to say that
\be \label{g-of-T-comp}
g^\ell=[g_{ij}^\ell], \text{ where } g_{ij}^\ell=\ip{e_j^\ell}{e_i^\ell}\qquad \text{for} \ i,j=1,\ldots,n. 
\ee
Define $T^\ell \in \b(\c^n)$ by 
\be \label{defTcomp}
T^\ell  e_j^\ell = \lambda_j e_j^\ell , \ j =1, \dots, n.
\ee
Note that since $g^\ell$  is normalised, that is,
\[
g_{ii}^\ell=\ip{e_i^\ell}{e_i^\ell}  = 1 \qquad \text{for} \ i=1,\ldots,n,
\]
and so
$\|e_i^\ell \| = 1 $ for $i=1,\ldots,n$. Then, by Proposition \ref{g-to-T-DP}, 
\[
\sigma(T^\ell)=\{\lambda_1,\ldots,\lambda_n\}
\]
and $T^\ell \in \f_{\mathrm{dp}}(\delta,\lambda)$.
By the compactness of the unit sphere in $\c^n$,
we can choose a subsequence $(e_i^{\ell_k})_{k \ge 1} $ of $(e_i^{\ell})_{\ell \ge 1} $ such that 
 $(e_j^{\ell_k})$ converges to a unit vector $v_j \in \c^n$ as  $k \to \infty$ for $j=1,\dots,n$.
  By the compactness of  the unit ball in $\b(\c^n)$, by passing to a further subsequence $(e^{\ell_k})_{k\geq 1}$ of $(e^\ell)_{\ell\geq 1}$ we can arrange also that
 $(T^{\ell_k})$ converges to a limit $T \in \b(\c^n)$ as  $k \to \infty$.
In the relations 
\be \label{defTcomp-2}
T^{\ell_k}  e_j^{\ell_k} = \lambda_j e_j^{\ell_k}, \ j =1, \dots, n,
\ee
let $k \to \infty$ to obtain 
\be \label{defTcomp-3}
T  v_j = \lambda_j v_j \qquad \text{and } \ \|v_j \| = 1\ \qquad j =1, \dots, n.
\ee
Thus 
\[
\sigma(T^\ell)=\{\lambda_1,\ldots,\lambda_n\},
\]
the eigenvectors $v_1,\dots,v_n$ of $T$ corresponding to the distinct eigenvalues $\lambda_1,\ldots,\lambda_n$ are linearly independent and therefore span $\c^n$,
and $T^\ell \in \f_{\mathrm{dp}}(\delta,\lambda)$.
Let $g$ be the Gramian of the vectors $v_1, \dots, v_n$ in $\c^n$: then $g$ is positive definite, and
by Proposition \ref{T-g-G(lam)}, $g\in \mathcal{G}_{\mathrm {dp}}^{\mathrm{norm}} (\lambda)$.
We have
\[
g_{ij}=\ip{v_j}{v_i}= \lim_{k \to\infty} \ip{v_j^{\ell_k}}{v_i^{\ell_k}} = \lim_{k\to\infty} g_{ij}^{\ell_k}
\]
for $i,j=1,\dots,n$, and so $g^{\ell_k} \to g$ as $k\to\infty$.
We have shown that $\mathcal{G}_{\mathrm {dp}}^{\mathrm{norm}} (\lambda)$ is sequentially compact in 
the metrizable topology of $\b(\c^n)$, hence it is compact.

To prove the ``Moreover", fix target data $z_1,\dots,z_n$.  Since $\mathcal{G}_{\mathrm {dp}} (\lambda) \supset \mathcal{G}_{\mathrm {dp}}^{\mathrm{norm}} (\lambda)$, trivially statement \eqref{more} implies statement \eqref{less}.  Conversely, suppose statement \eqref{less} holds and consider any kernel $g \in \mathcal{G}_{\mathrm {dp}} (\lambda)$.  Define matrices $h=[h_{ij}]$ and $C$ by the relations \eqref{defh}, \eqref{defci} and \eqref{defC}.  Then $h \in \mathcal{G}_{\mathrm {dp}}^{\mathrm{norm}} (\lambda)$, 
and so, by assumption,
\be\label{219}
[(1-\overline z_i z_j)h_{ij}] \ge 0
\ee
Conjugate this matrix inequality by $\mathrm{diag}\{c_1,\dots,c_n\}$ to obtain the relation \eqref{more}.
Thus the relation \eqref{less} implies the relation \eqref{more}. \end{proof}

Say that a DP-Szeg\H{o} kernel $g$ on $R_\de$ is {\em reducible} if there exist  DP-Szeg\H{o} kernels
$h$ and $k$ on $R_\de$ such that $g=h+k$ and neither $h$ nor $k$ is diagonally congruent to $g$.
Here two kernels $g$ and $h$ on $R_\de$ are said to be diagonally congruent if there exists a function $c:R_\de \to \c\setminus\{0\}$ such that, for all $\lam,\mu \in R_\de$, $h(\lam,\mu)= c(\lam)g(\lam,\mu) \ov{c(\mu)}$.  A DP-Szeg\H{o} kernel is {\em irreducible} if it is not reducible.  Clearly, if DP Pick data $\lam_j \mapsto z_j, j=1,\dots,n$, are such that
\[
[(1-\overline z_iz_j)g_{ij}] \geq 0\ \text{and}\  [1-\frac{\de^2}{\ov{z_i}z_j} g_{ij}] \geq 0
\]
for all irreducible DP Szeg\H{o} kernels $g$ then the same inequality holds for {\em all} DP Szeg\H{o} kernels, and consequently the DP pick interpolation problem is solvable. 
Since the class of irreducible DP Szeg\H{o} kernels is likely to be {\em much} smaller than the class of all DP Szeg\H{o} kernels, it would be valuable to identify the irreducible DP Szeg\H{o} kernels on $R_\de$.
\begin{prob}
 Find an effective description of the irreducible DP Szeg\H{o} kernels on $R_\de$.
\end{prob}

\section{The DP Pick problem and DP-Szeg\H{o} kernels}\label{dp-Pick}
In this section we shall prove our main theorem, which is a solvability criterion for DP Pick problems in terms of DP-Szeg\H{o} kernels.  We also present some examples which illustrate the relationship between the Pick and DP Pick interpolation problems.

The following notation and terminology will be needed in the proofs.

\begin{defin}
Let $H_n$ be the real linear space of Hermitian matrices in $\c^{n \times n}$.
A subset $P$ of $H_n$ is called a {\em cone} if the following conditions are satisfied:
(i)  $P +P \subseteq P$, (ii) $ P \cap (-P) = \{0\}$ and (iii)  $\alpha P \subseteq P$ whenever
 $\alpha  \in \r $ and $\alpha \ge 0$.
\end{defin}

\begin{thm} \label{iff-dp-kernel-z}
Let  $\lam_1,\dots,\lam_n \in R_\de$ be distinct  and let $z_1,\dots,z_n\in\c$.
There exists $ \phi \in \essdp$ such that
\[\phi(\lambda_j) = z_j \qquad \text{for} \ j=1,\ldots,n, \]
if and only if, for all 
$g \in \mathcal{G}_{\mathrm {dp}} (\lambda)$,
\be\label{212-1}
[(1-\overline z_i z_j)g_{ij}] \ge 0.
\ee
\end{thm}

\begin{proof}
Implication $\Rightarrow$ follows from Proposition \ref{dp-kernel-z}. 

To prove $\Leftarrow$, suppose that  
\be\label{221}
[(1-\overline z_i z_j)g_{ij}] \ge 0
\ee
for all 
$g \in \mathcal{G}_{\mathrm {dp}} (\lambda)$.

By Theorem \ref{Pick-dp}, to show that  the  DP Pick Problem \eqref{DPPick} is solvable it suffices to prove that  there exist a pair of $n \times n$ positive semi-definite matrices 
$A = [ a_{ij}]$ and $B= [ b_{ij}]$
such that 
\[
1-\overline{z_i} z_j =(1-\overline{\lambda_i} \lambda_j)a_{ij}
 + (\overline \lambda_i \lambda_j-\de^2)b_{ij}
\]
for all $i,j=1, \dots, n$.
Let $H_n$ be the real linear space of Hermitian matrices in $\c^{n \times n}$, and let 
\be\label{Cone_C}
{\mathcal C} = 
\left\{ [(1-\overline{\lambda_i} \lambda_j) a_{ij} ]_{i,j=1}^n +
 \left[\left(1-\frac{\de^2}{\overline{\lambda_i} \lambda_j} \right) b_{ij} \right]_{i,j=1}^n:
[a_{ij}]_{i,j=1}^n \ge 0 \ \text{and} \ [b_{ij}]_{i,j=1}^n \ge 0 \right\}.
\ee
The subset ${\mathcal C}$ is a closed convex cone in $H_n$. 

Note that every $n \times n$ positive semi-definite matrix  $[a_{ij}]_{i,j=1}^n$ belongs to ${\mathcal C}$. By the positivity of Szeg\H{o} kernel $\left[ \frac{1}{1-  \overline{\lambda_i} \lambda_j} \right]_{i,j=1}^n$, the $n \times n$ matrix of the form
\[
\left[ \frac{a_{ij}}{1- \overline{\lambda_i} \lambda_j} \right]_{i,j=1}^n
\]
is also positive semi-definite. In the definition  of ${\mathcal C}$ \eqref{Cone_C}
we may replace $[a_{ij}]_{i,j=1}^n$
by $\left[ \frac{a_{ij}}{1- \overline\lambda_i \lambda_j} \right]_{i,j=1}^n$ and 
$[b_{ij}]_{i,j=1}^n$ by the zero matrix, to deduce that $[a_{ij}]_{i,j=1}^n$ belongs to ${\mathcal C}$.

By the Hahn-Banach theorem, to show that
$[1-\overline{z_i} z_j]_{i,j=1}^n$ belongs to ${\mathcal C}$ it suffices to prove that, for every real linear functional ${\mathcal L}$ on  $H_n$, ${\mathcal L} \ge 0 $ on ${\mathcal C}$ implies ${\mathcal L} ( [1-\overline{z_i} z_j]_{i,j=1}^n ) \ge 0 $.

Extend ${\mathcal L} $ to a complex linear  functional $\widetilde{\mathcal L}$ on $\c^{n \times n}$ by
\[
\widetilde{\mathcal L}(X + {\mathrm i} \, Y) = {\mathcal L}(X) + {\mathrm i} \, {\mathcal L}(Y)
\]
for $X, Y \in H_n$. Now define a pre-inner product $\langle \cdot, \cdot \rangle_L $ on $\c^n$ by 
\[
\langle\, c, d\,\rangle_L = \widetilde{\mathcal L} (c \otimes d)
\]
for $c, d \in \c^n$. Here $c \otimes d \in \c^{n \times n}$, defined by 
\[
(c \otimes d)(x) = \langle \, x, d \,\rangle_{\c^n} c \; \; \; \text{for all} \ x \in \c^n.
\]
Note that, for any $c \in \c^n$, 
\[
\langle\, c, c\,\rangle_L = \widetilde{\mathcal L} (c \otimes c) = {\mathcal L} (c \otimes c) \geq 0.
\]
Let 
\[
{\mathcal N}= \{ x \in \c^n: \, \langle\, x, x\,\rangle_L = 0\}.
\]
Then ${\mathcal N}$ is a subspace of $\c^n$, and $\langle \cdot, \cdot \rangle_L $ induces an inner product on $\c^n /{\mathcal N}$.

Let  $e_1, \dots, e_n$ be the standard basis of $\c^n$ and let  $T \in \b(\c^n)$ defined by 
\be \label{defT-2}
T e_j= \lambda_j e_j, \ j =1, \dots, n.
\ee
Let us construct an operator $\widetilde T$ on $\c^n/{\mathcal N}$ such that $\|\widetilde T\| \leq 1$ and $\|\de \widetilde T\inv\| \leq 1$.
For $ x= \sum_{j=1}^n \xi_j e_j \in \c^n $, we have
\begin{align}\label{T-lambda-2}
\langle\, x, x\,\rangle_L  - \langle\, Tx, Tx\,\rangle_L & = \widetilde{\mathcal L} (x \otimes x) - \widetilde{\mathcal L} (Tx \otimes Tx) \notag\\
& =\widetilde{\mathcal L}\left(  \sum_{j=1}^n \xi_j e_j\,\otimes \sum_{i=1}^n \xi_i e_i \right) -
\widetilde{\mathcal L}\left(  \sum_{j=1}^n \xi_j \lambda_j e_j\,\otimes \sum_{i=1}^n \xi_i \lambda_i e_i \right) \notag\\
& =\widetilde{\mathcal L}\left( \sum_{j,i=1}^n  \xi_j \overline \xi_i e_j\,\otimes   e_i \right) -
\widetilde{\mathcal L}\left( \sum_{j,i=1}^n  \xi_j \lambda_j \overline \xi_i \overline\lambda_i e_j\,\otimes   e_i \right) \notag \\
& =\widetilde{\mathcal L}\left( \sum_{j,i=1}^n (1- \overline\lambda_i \lambda_j )  \overline \xi_i \xi_j  e_j\,\otimes   e_i \right) \notag\\
& =\widetilde{\mathcal L} \left[(1- \overline\lambda_i \lambda_j )  \overline \xi_i \xi_j \right]_{j,i=1}^n \notag\\
& ={\mathcal L} \left[(1- \overline\lambda_i \lambda_j )  \overline \xi_i \xi_j \right]_{j,i=1}^n 
\ge 0 \;\;\; \text {since}\;\;\mathcal{ L} \geq 0 \ \text{on} \  {\mathcal C}.
\end{align}
Thus 
\be\label{5.7a}
\langle\, x, x\,\rangle_L  - \langle\, Tx, Tx\,\rangle_L \geq 0,
\ee
and so $ x \in {\mathcal N}$ implies that $ Tx \in {\mathcal N}$.
Hence $T$ induces an operator $\widetilde T $ on  $\c^n /{\mathcal N}$ by 
\[
\widetilde{T} (x+{\mathcal N}) = T x + {\mathcal N},
\]
and $\|\widetilde{T} (x+{\mathcal N}) \|^2 \leq \|x+{\mathcal N} \|^2$ for all 
$ (x+{\mathcal N}) \in \c^n /{\mathcal N}$, which implies that 
\be\label{1star}
\|\widetilde T\|\leq 1.
\ee

Notice that $\sigma(T)=\{\lam_1,\dots,\lam_n\} \subset R_\de$ and so $T$ is invertible.
Moreover 
\[
\de T\inv e_j = \frac{\de}{\lam_j} e_j \ \text{for} \ j=1,\dots,n,
\]
and so, in the chain of equations leading to equation \eqref{T-lambda-2}, we may replace $T$ by $\de T\inv$ and $\lam_j$ by $\frac{\de}{\lam_j}$ to deduce that, for $x=\sum_{j=1}^n \xi_j e_j \in \c^n$,
\[
\ip{x}{x}_L-\ip{\de T\inv x}{\de T\inv x}_L = \mathcal{L}\left[\left(1-\tfrac{\de}{\ov{\lam_i}} \tfrac{\de}{\lam_j}\right) \ov{\xi_i}\xi_j\right]_{j,i=1}^n.
\]
Clearly $\left[\left(1-\tfrac{\de}{\ov{\lam_i}} \tfrac{\de}{\lam_j}\right) \ov{\xi_i}\xi_j\right]_{j,i=1}^n \in \mathcal{C}$ (take $a_{ij}=0, b_{ij}=\ov{\xi_i}\xi_j$ in the defining expression \eqref{Cone_C}), and so, since $\mathcal{L} \geq 0$ on $\mathcal{C}$, we have
\be\label{2stars}
\ip{x}{x}_L-\ip{\de T\inv x}{\de T\inv x}_L \geq 0.
\ee
Thus $ x \in {\mathcal N}$ implies that $ \de T\inv x \in {\mathcal N}$, and therefore
$\de T\inv$ induces an operator $\widetilde{(\de T\inv)}$ on  $\c^n /\mathcal{N}$ by 
\[
\widetilde{(\de T\inv)} (x+\mathcal{N}) = \de T\inv x + \mathcal{N},
\]
and in the light of inequality \eqref{2stars},
\be\label{3stars}
\|\widetilde{(\de T\inv)}\| \leq 1.
\ee
We have, for any $x\in\c^n$,
\[
\widetilde T \widetilde{(\de T\inv)} (x+\mathcal{N}) = \widetilde T (\de T\inv x+\mathcal{N}) = T \de T\inv x+\mathcal{N} = \de(x+\mathcal{N}),
\]
and so $\widetilde{(\de T\inv)} = \de (\widetilde T)\inv$.  Hence, by the inequality \eqref{3stars},
\[
\|\de (\widetilde T)\inv\| = \|\widetilde{(\de T\inv)} \| \leq 1.
\]

Therefore, $\tilde T$ is a Douglas-Paulsen operator.  Since the eigenvalues of $T$, which are $\lambda_1, \dots, \lambda_n$, belong to $R_\de$, $\sigma(T) \subseteq  R_\de$, and so the operator 
$T$ belongs to $\f_{\mathrm{dp}}(\de,\lam)$. 
Therefore, by Proposition \ref{T-g-G(lam)},  $ [ \langle\, e_j,e_i \,\rangle_L]_{i,j=1}^n $
belongs to $\mathcal{G}_{\mathrm {dp}} (\lambda)$.

Let $g_{ij}=\langle\, e_j,e_i \,\rangle_L$ for $i,j=1,\dots,n$.
By supposition \eqref{221},
$$ [(1-\overline z_i z_j)  \langle\, e_j,e_i \,\rangle_L]_{i,j=1}^n \ge 0.$$
Choose a polynomial $p$ such that $p(\lambda_i) = z_i$, $i =1,\dots, n$. Then
$~p(\widetilde T)e_i = z_i e_i$, $i =1,\dots, n$. Observe that
\begin{align*}
\left[\ip{(1-p(\tilde T)^*p(\tilde T))e_j}{e_i}\right] &= \left[ \ip{e_j}{e_i} - \ip{p(\tilde T)e_j}{p(\tilde T)e_i}\right] \\ 
	&= \left[ \ip{e_j}{e_i} - \ip{z_je_j}{z_ie_i}\right] \\ 
	&= \left[ (1- \overline z_i z_j)g_{ij} \right] \ge 0.
\end{align*}
 Therefore,
$\|p(\widetilde T)\| \le 1$. 
Choose $c = \begin{bmatrix} 1 \\ \dots \\ 1
\end{bmatrix} \in \c^n$. Then 
\[  \langle\,( 1- p(\widetilde T)^* p(\widetilde T)) c, c\,\rangle_L \ge 0,\]
that is, 
\[ {\mathcal L} ( [1-\overline z_i z_j]_{i,j=1}^n *c c^*) \ge 0, \ \text{and so } \ {\mathcal L} ( [1-\overline z_i z_j]_{i,j=1}^n ) \ge 0,
\]
where $*$ denotes the Schur product of matrices.

Thus, for every real linear functional ${\mathcal L}$ on  $H_n$ such that ${\mathcal L} \ge 0 $ on ${\mathcal C}$ we have $${\mathcal L} ( [1-\overline z_i z_j]_{i,j=1}^n ) \ge 0. $$
Hence $[1-\overline z_i z_j]_{i,j=1}^n$ belongs to ${\mathcal C}$. \end{proof}

We show in the next theorem that, as in the classical Pick theorem, if a DP Pick problem is solvable then it is solvable by a {\em rational} function in $\essdp$.

\begin{thm}  \label{extr-finite}
Let  $\lam_1,\dots,\lam_n \in R_\de$ be distinct  and let $z_1,\dots,z_n\in\c$.
If the DP Pick problem  
\[
\lambda_j \mapsto  z_j \quad \text{for} \ j=1,\ldots,n
\]
is solvable, 
 then there exists a rational function $\phi \in \essdp$ which satisfies the equations 
 \be\label{200ex-3}
\phi(\lambda_j) = z_j \qquad \text{for} \ j=1,\ldots,n,
\ee
and has a model $(\m,u)$, with $u:R_\de \to \m$ holomorphic, so that
\be\label{120-2}
1-\overline{\phi(\mu)}\phi(\lambda) = \Bigip{\big(1-E(\mu)^*E(\lambda)\big)\ u(\lambda)}{u(\mu)}_\m\ 
\text{for}\  \lambda,\mu \in R_\delta,
\ee
where $\m$ can be written as $\m=\m_1\oplus \m_2$, $\dim \m \leq 2n$ and $E:R_\delta \to \b(\m)$ is defined by the formula
\be\label{formE-4}
E(\lambda)=\begin{bmatrix}\lambda & 0\\ 0 & \frac{\delta}{\lambda}\end{bmatrix}, \ \qquad\text{for}\  \lambda \in R_\delta,
\ee
with respect to this orthogonal decomposition of $\m$.
\end{thm}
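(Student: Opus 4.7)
The plan is to unpack the matrix solvability criterion of Theorem \ref{Pick-dp} into a finite-dimensional gramian identity, run a lurking-isometry construction as in the proof of Theorem \ref{thm3.8} but entirely in finite dimensions, and read off a rational solution from the resulting realization formula.

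\textbf{Step 1 (from the matrix criterion to a finite-dimensional gramian).} Since the DP Pick problem is solvable, Theorem \ref{Pick-dp} furnishes $n\times n$ positive semi-definite matrices $A=[a_{ij}]$ and $B=[b_{ij}]$ satisfying
\[
1-\bar z_i z_j \;=\; (1-\bar\lambda_i\lambda_j)a_{ij}\;+\;\left(1-\tfrac{\delta^2}{\bar\lambda_i\lambda_j}\right)b_{ij}.
\]
Moore's Theorem applied to the finite matrices $A$ and $B$ produces Hilbert spaces $\m_1,\m_2$ with $\dim\m_k\le n$ and vectors $u_k(\lambda_j)\in\m_k$ realising these kernels as gramians. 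Setting $\m=\m_1\oplus\m_2$, $u(\lambda_j)=u_1(\lambda_j)\oplus u_2(\lambda_j)$, and $E(\lambda)$ as in \eqref{formE-4}, the identity rewrites as
\[
1-\bar z_i z_j \;=\; \Bigip{(1-E(\lambda_i)^*E(\lambda_j))u(\lambda_j)}{u(\lambda_i)}_\m,\qquad i,j=1,\ldots,n,
\]
and $\dim\m\le 2n$.

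\textbf{Step 2 (lurking isometry without enlargement).} Reshuffling this identity exactly as in the proof of Theorem \ref{thm3.8} shows that the two families
\[
\left\{\bpm 1\\ E(\lambda_j)u(\lambda_j)\epm\right\}_{j=1}^n \quad\text{and}\quad \left\{\bpm z_j\\ u(\lambda_j)\epm\right\}_{j=1}^n
\]
in $\c\oplus\m$ have identical gramians. The Lurking Isometry Lemma produces a linear isometry $L:\x\to\y$ between the spans of these two families. Because $\c\oplus\m$ is finite-dimensional and $\dim\x=\dim\y\le n$, the orthogonal complements $\x^\perp,\y^\perp\subset\c\oplus\m$ have equal dimension, so $L$ extends to a unitary on all of $\c\oplus\m$ with no enlargement of $\m$. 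This preservation of dimension is the crucial finite-dimensional gain over the generic version of Theorem \ref{thm3.8}.

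\textbf{Step 3 (rational solution via the realization formula).} Decompose the unitary $L$ into a block matrix as in \eqref{130}, define $\phi$ on $R_\de$ by the realization formula \eqref{140}, and extend $u$ to all of $R_\de$ by $u(\lambda)=(1-DE(\lambda))^{-1}\gamma$. The converse direction of Theorem \ref{thm3.8} then yields that $(\m,u)$ is a DP-model of $\phi$, so $\phi\in\essdp$ and the identity \eqref{120-2} holds. Equating the two block rows of
\[
L\bpm 1\\ E(\lambda_j)u(\lambda_j)\epm \;=\; \bpm z_j \\ u(\lambda_j)\epm
\]
at each node $\lambda_j$ shows simultaneously that the newly defined $u(\lambda)$ agrees with the Moore vector $u(\lambda_j)$ and that $\phi(\lambda_j)=z_j$. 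Finally, since $D$ acts on the finite-dimensional space $\m$ and the entries of $E(\lambda)$ are rational functions of $\lambda$, the map $\lambda\mapsto(1-DE(\lambda))^{-1}\gamma$ has rational entries and therefore $\phi$ is rational.

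\textbf{Main obstacle.} The chief delicacy is the dimension accounting in Step 2: we have to exploit that the index set has only $n$ points (so the two spans fit inside a $(1+2n)$-dimensional ambient) and that this ambient is finite-dimensional (so equal-dimension complements automatically match), which together guarantee that extending the partial isometry to a unitary does not inflate $\m$ beyond dimension $2n$. Once this is in place the remaining steps are essentially mechanical applications of lemmas stated earlier in the paper.
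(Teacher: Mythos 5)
Your proposal is correct and follows essentially the same route as the paper: extract the positive semi-definite matrices $A,B$ from Theorem \ref{Pick-dp}, factor them through spaces of dimension at most $n$, apply the lurking isometry lemma, and read off a rational interpolant from the resulting finite-dimensional realization. The only presentational difference is that you spell out why the partial isometry extends to a unitary of $\c\oplus\m$ without enlarging $\m$ (equal-dimensional complements in a finite-dimensional ambient space), a point the paper passes over silently by simply asserting the existence of an isometry on $\c\oplus\c^{r_1}\oplus\c^{r_2}$.
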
 

\begin{proof}
Suppose that 
\[
\lam_j \mapsto z_j\ \text{for}\ j=1,\dots,n
\]
is a solvable DP-Pick problem.  By Theorem \ref{Pick-dp}, there exist positive semi-definite $n\times n$ matrices $a=\bbm a_{ij} \ebm$ and $b=\bbm b_{ij} \ebm$ such that 

\be\label{datamodel}
1-\overline z_i z_j = (1-\overline\lam_i\lam_j) a_{ij} + (1- \frac{\de^2}{\ov{\lam_i}\lam_j}) b_{ij} \quad \text{for} \ i,j=1,\dots,n.
\ee
Let the ranks of the matrices $a, b$ be $r_1,r_2$ respectively, so that $r_1\leq n, r_2\leq n$.  Then there exist vectors $x_1,\dots,x_n \in \c^{r_1}, y_1,\dots,y_n\in\c^{r_2}$ such that 
\[
a_{ij}= \ip{x_j}{x_i}_{\c^{r_1}}\ \text{and}\  b_{ij}= \ip{y_j}{y_i}_{\c^{r_2}} \quad \text{for}\  i,j=1,\dots,n.
\]
Substituting these relations into the equations \eqref{datamodel} and re-arranging, we obtain the relations
\[
1+\ip{\lam_j x_j}{\lam_i x_i}_{\c^{r_1}} +\ip{\frac{\de}{\lam_j}y_j}{\frac{\de}{\lam_i}y_i}_{\c^{r_2}} = \overline z_i z_j + \ip{x_j}{x_i}_{\c^{r_1}} +\ip {y_j}{y_i}_{\c^{r_2}}\ \text{for}\  i,j=1,\dots,n.
\]
These equations can in turn be expressed by saying that the families of vectors
\[
\bpm 1 \\ \lam_j x_j \\ \frac{\de}{\lam_j} y_j \epm_{j=1,\dots,n}\ \text{and}\ \bpm z_j \\ x_j \\ y_j \epm_{j=1,\dots,n}
\]
in $\c\oplus\c^{r_1}\oplus\c^{r_2}$ have the same gramians.  It follows from the ``lurking isometry lemma" \cite[Lemma 2.18]{amy20} that there exists an isometry $L\in \b(\c\oplus \c^{r_1}\oplus\c^{r_2})$ such that
\be\label{propL}
L\bpm 1 \\ \lam_j x_j \\ \frac{\de}{\lam_j} y_j \epm = \bpm z_j \\ x_j \\ y_j \epm\ \text{for}\ j=1,\dots,n.
\ee
Express $L$ by an operator matrix with respect to the orthogonal decomposition $\c\oplus\left(\c^{r_1} \oplus \c^{r_2}\right)$:
\[
L \sim \bbm a & 1\otimes \beta \\ \gamma\otimes 1 &D \ebm,
\]
where $a\in\c,\ \beta,\gamma \in \c^{r_1} \oplus \c^{r_2}$ and $D \in\b(\c^{r_1} \oplus \c^{r_2})$.  In terms of these variables and our previous notation
\[
E(\lam) \df \bbm \lam & 0\\ 0 & \frac{\de}{\lam} \ebm :\c^{r_1} \oplus \c^{r_2} \to \c^{r_1} \oplus \c^{r_2} \ \text{for}\ \lam\in R_\de,
\]
equation \eqref{propL} can be written
\begin{align} \label{propLbis}
a+ \ip{E(\lam_j)\bpm x_j \\ y_j\epm}{\beta}_{\c^{r_1} \oplus \c^{r_2}} &= z_j  \notag\\
\gamma + DE(\lam_j)\bpm x_j \\ y_j\epm &= \bpm x_j \\ y_j\epm
\end{align}
for $j=1,\dots,n$.  Observe that, for any $\lam\in R_\de$, $\|E(\lam)\| < 1$. As also $\|D\| \leq 1$ (since $L$ is an isometry), $1-DE(\lam_j)$ is invertible for each $j$.  The equations \eqref{propLbis} can therefore be solved to give
\begin{align}\label{solved}
\bpm x_j \\ y_j\epm &= (1-DE(\lam_j))\inv \gamma \notag \\
z_j &= a + \ip{E(\lam_j)(1-DE(\lam_j))\inv \gamma}{\beta}.
\end{align}
Now define $\ph\in\hol(R_\de)$ by 
\be\label{modelph}
\ph(\lam) = a + \ip{E(\lam)(1-DE(\lam))\inv \gamma}{\beta}_{\c^{r_1} \oplus \c^{r_2}}, \ \text{for} \ \lam \in R_\de.
\ee
By equation \eqref{solved}, $\ph(\lam_j) = z_j$ for $j=1,\dots,n$, and by \cite[Theorem 9.54]{amy20}, $\ph\in\ess_{\mathrm{dp}}$, while equation \eqref{modelph} constitutes a DP-realization for $\ph$.  By Cramer's rule for an invertible matrix, the function $\ph$ defined by equation \eqref{modelph} is a rational function.
 Accordingly, by Theorem \ref{thm3.8}, if we set $\m=\c^{r_1} \oplus \c^{r_2}$ and define a holomorphic function $u: R_\de \to \m$ by $u(\lam)=(1-E(\lam)D)\inv \gamma$, for $\lam \in R_\de$,
 then $(\m,u)$ as in equation \eqref{120-2} is a DP-model for $\ph$, while clearly $\dim \m =r_1+r_2 \leq 2n$. \end{proof}

\begin{rem}\label{PickandDPPick} {\em Solvable Pick data on $\d$ are also solvable as DP Pick data.}
Let  $\lambda_1,\ldots,\lambda_n$ be $n$ distinct points in $R_\delta$ and $z_1,\ldots,z_n \in \c$.
Suppose the Pick interpolation problem  on the open unit disc $\d$
\[
\lambda_j \mapsto  z_j \quad \text{for} \ j=1,\ldots,n
\]
is solvable. Then the  DP Pick Problem 
\[
\lambda_j \mapsto  z_j \quad \text{for} \ j=1,\ldots,n
\]
is also solvable. 
\end{rem}
\begin{proof}
By the assumption,
there exists a holomorphic function $\phi:\d \to \c$ such that $\phi(\lam_j)=z_j$ for $j=1,\dots,n$ and $\|\phi\|_{\hinf(\d)}\leq 1$. 
By Proposition \ref{phi-in-holD}, 
\be
\norm{\phi|R_\delta}_{\mathrm {dp}} =\|\phi\|_{\hinf(\d)}  \leq 1,
\ee
and so
the restriction of $\phi$ to $R_\delta$ is in $\mathcal{S}_{\mathrm {dp}}$,
which is to say that the corresponding DP Pick problem is solvable. \end{proof}

As the dp norm and sup norm are different, the converse statement to Remark \ref{PickandDPPick} is false, as one would expect.
The following two examples provide concrete instances of this fact.

\begin{ex} \label{DPsolv-notPsolv} {\em A solvable DP Pick data-set which is not a solvable Pick data-set on $\d$.}\\
Let $\delta \in (0,\half)$ and consider the $2$ distinct points  $\lambda_1 =\half, \lambda_2 = -\half$ in $R_\delta$. 
Recall that in Example \ref{dp-neq-infty} we showed that the function $\ph  \in \hol (R_\de)$, $\phi(\lambda) =  \half(\lam + \frac{\delta}{\lam})$ satisfies $\|\phi\|_{\mathrm{dp}} =1$.
Let $z_1=\phi(\lam_1)= \delta + \frac{1}{4}$, $z_2=\phi(\lam_2)= -(\delta + \frac{1}{4})$.
Thus the  DP Pick Problem 
\[
\lambda_j \mapsto  z_j \quad \text{for} \ j=1,2,
\]
is solvable by the function $\phi(\lambda) =  \half(\lambda + \frac{\delta}{\lambda})$. 

As to the Pick interpolation problem on the open unit disc $\d$
\[
\lambda_j \mapsto  z_j \quad \text{for} \ j=1,2,
\]
solvability depends on the value of $\de$. 
There are $3$ cases:
\begin{enumerate}[(i)]
\item for $\delta \in (0,\frac{1}{4})$,
the Pick interpolation problem
\[
\lambda_j \mapsto  z_j \quad \text{for} \ j=1,2,
\]
is solvable;
\item for $\delta = \frac{1}{4}$,
the Pick interpolation problem
\[
\lambda_j \mapsto  z_j \quad \text{for} \ j=1,2,
\]
 on the open unit disc $\d$ is extremally solvable and has the unique solution $f(\lambda) = \lambda$;
\item for $\delta \in (\frac{1}{4},\frac{1}{2} )$,
the Pick interpolation problem
\[
\lambda_j \mapsto  z_j \quad \text{for} \ j=1,2,
\]
is not solvable on $\d$.
\end{enumerate}
\end{ex}
\begin{proof} To prove (i)-(iii) on the solvability of the Pick interpolation problem on the open unit disc $\d$
\[
\lambda_j \mapsto  z_j \quad \text{for} \ j=1,2,
\]
we  consider the appropriate Pick matrix, which here is
\[
P(\delta) = \displaystyle \bbm \frac{1-\ov {z_i} z_j}{1-\ov{\lam_i}\lam_j} \ebm_{i,j=1}^2. 
\]
That is,
\be
P(\delta) =\begin{bmatrix} \frac{1-(\delta + \frac{1}{4})^2}{ 1- \frac{1}{4}} &
 \frac{1+(\delta + \frac{1}{4})^2}{ 1+ \frac{1}{4}} \\ \\
 \frac{1+(\delta + \frac{1}{4})^2}{ 1+ \frac{1}{4}} & \frac{1-(\delta + \frac{1}{4})^2}{ 1- \frac{1}{4}}.\\
\end{bmatrix}
\ee
It is clear that, for  $\delta \in (0,\half)$,
\[
P(\delta)_{11} =\frac{1-(\delta + \frac{1}{4})^2}{ 1- \frac{1}{4}} > 0.
\]
A little calculation shows that the determinant of the Pick matrix
\[
\det P(\de)= \frac{16^2}{15^2}\left\{\de^2+\half\de-\frac{3}{16}\right\}\left\{\de^2+\half\de-\frac{63}{16}\right\},
\]
from which one can deduce that 
\begin{enumerate}[(i)]
\item
when  $\delta \in (0,\frac{1}{4})$, $\det P(\delta) >0$ and so $P(\de)> 0$; 
\item $\det P(\delta) =0$, when $\delta = \frac{1}{4}$;
and 
\item $\det P(\delta) <0$, when $\delta \in (\frac{1}{4},\frac{1}{2} )$.
\end{enumerate}
Therefore the  Pick matrix $P(\delta)$ is not positive when $\delta \in (\frac{1}{4},\frac{1}{2} )$. Thus, by Pick's theorem, for $\delta \in (\frac{1}{4},\frac{1}{2} )$,
the Pick interpolation problem
\[
\lambda_j \mapsto  z_j \quad \text{for} \ j=1,2,
\]
is not solvable, while, for $\de=\tfrac 14$, the Pick interpolation problem is uniquely solvable, and one sees by inspection that the unique solution is the function $f(\lam)=\lam$.
\end{proof}

\begin{ex} \label{notPicksolvable}  {\em Another solvable DP Pick data-set which is not a solvable Pick data-set on $\d$.} 
Let $\de\in (0,1)$ and let $\lam_1 = \frac{\de+\sqrt{\de}}{2}$, $\lam_2=-\lam_1$.  
We have $0 <\de < \frac{\de+\sqrt{\de}}{2} <\sqrt{\de} <1$, so that $\lam_1,\lam_2 \in R_\de$.
Recall from Example \ref{dp-neq-infty} that the function $\phi(z)=\tfrac 12(z+\frac{\de}{z})$ on $R_\de$ satisfies $\|\phi\|_{\mathrm{dp}} =1$.
Consider the DP-Pick problem 
\be\label{nPs}
\lam_i \mapsto z_i \df  \phi(\lam_i), i=1,2.
\ee
  Clearly this is a solvable DP-Pick problem, with solution $\ph$.  However, the Pick problem with the same data $\lam_j \mapsto z_j, j=1,2$, is not solvable.   Indeed, the Pick matrix for the problem \eqref{nPs} is
\[
P= \bbm \frac{1-|z_1|^2}{1-|\lam_1|^2} &  \frac{1+|z_1|^2}{1+|\lam_1|^2} \\
 \frac{1+|z_1|^2}{1+|\lam_1|^2} & \frac{1-|z_1|^2}{1-|\lam_1|^2} \ebm.
 \]
Thus
\begin{align*}
\det P &= \left(\frac{1-|z_1|^2}{1-|\lam_1|^2}\right)^2 - \left(\frac{1+|z_1|^2}{1+|\lam_1|^2}\right)^2 \\
	&= D_1D_2
\end{align*}
where
\begin{align*}
D_1 &= \frac{1-|z_1|^2}{1-|\lam_1|^2} - \frac{1+|z_1|^2}{1+|\lam_1|^2} \\
	&= \frac{2(|\lam_1|^2 - |z_1|^2)}{1-|\lam_1|^4}, \\
D_2 &= \frac{1-|z_1|^2}{1-|\lam_1|^2} + \frac{1+|z_1|^2}{1+|\lam_1|^2} \\
	&=\frac{2(1-|z_1\lam_1|^2)}{1-|\lam_1|^4}.
\end{align*}
Now
\begin{align*}
z_1 &= \frac {1}{2} \left(\lam_1+\frac{\de}{\lam_1} \right)= \frac {1}{2}\left(\frac{\de+\sqrt{\de}}{2} + \frac{2\de}{\de+\sqrt{\de}}\right) \\
	&= \frac{\sqrt{\de}}{2}\left(\frac{1+\sqrt{\de}}{2} + \frac{2}{1+\sqrt{\de}}\right) 
	= \frac{\sqrt{\de}(5+2\sqrt{\de} +\de)}{4(1+\sqrt{\de})},
\end{align*}
and
\begin{align*}
0 < z_1\lam_1 &= \frac{\sqrt{\de}(5+2\sqrt{\de} +\de)}{4(1+\sqrt{\de})}. \frac{\sqrt{\de}(1+\sqrt{\de})}{2}
	= \frac{\de(5+2\sqrt{\de} +\de)}{8} \\
		&< 1.
\end{align*}
Thus $D_2 > 0$, and moreover
\begin{align*}
|\lam_1| - |z_1| &= \lam_1- \half (\lam_1 + \frac{\de}{\lam_1}) = \half\left(\frac{\de+\sqrt{\de}}{2} - \frac{2\de}{\de+\sqrt{\de}} \right) \\
	&= \frac{\sqrt{\de}}{2}\left( \frac{1+\sqrt{\de}}{2} - \frac{2}{1+\sqrt{\de}}\right) \\
	&= \frac{\sqrt{\de}(-3+2\sqrt{\de}+\de)}{4(1+\sqrt{\de})} \\
	&< 0,
\end{align*}
from which it follows that $D_1 < 0$, and hence $D < 0$. Thus the Pick matrix $P$ is not positive, and so, by Pick's Theorem, the Pick interpolation problem $\lam_j \mapsto z_j, j=1,2$, is not solvable.
\end{ex}

Since the Pick interpolation problem on $\d$ and the DP Pick problem on $R_\de$ are so closely related, it is natural to ask whether the Szeg\H{o} kernel on $\d$, when retricted to $R_\de$, is a DP-Szeg\H{o} kernel.   We can use Example \ref{notPicksolvable} to answer this question in the negative.

\begin{prop}\label{Szego-not-DP} Let $\delta \in (0,1)$.
The Szeg\H{o} kernel $[\frac{1}{1-\overline\mu \lambda}]$ restricted to $R_\delta$ is not a DP Szeg\H{o} kernel on $R_\delta$.
\end{prop}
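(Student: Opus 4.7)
The plan is to directly exhibit a finite subset of $R_\delta$ on which the second positivity condition in Definition \ref{dpkernel} fails for the classical Szeg\H{o} kernel. Since $(1-\bar\mu\lambda)\cdot\frac{1}{1-\bar\mu\lambda}\equiv 1$, the first condition is automatic; everything comes down to showing that the kernel
\[
K(\lambda,\mu)\df\left(1-\frac{\delta^2}{\bar\mu\lambda}\right)\frac{1}{1-\bar\mu\lambda}=\frac{\bar\mu\lambda-\delta^2}{\bar\mu\lambda\,(1-\bar\mu\lambda)}
\]
is not positive semi-definite on $R_\delta$.

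The simplest candidate is a two-point test with $\lambda_1=r$, $\lambda_2=-r$ for some real $r\in(\delta,1)$. For such a choice, $\bar\lambda_i\lambda_j=\pm r^2$, and the test matrix
\[
\bigl[K(\lambda_j,\lambda_i)\bigr]_{i,j=1}^2=\begin{pmatrix} a & b\\ b & a\end{pmatrix},
\qquad a=\frac{r^2-\delta^2}{r^2(1-r^2)},\ b=\frac{r^2+\delta^2}{r^2(1+r^2)},
\]
is positive semi-definite if and only if $a\ge b$. A short computation gives
\[
a-b=\frac{2(r^4-\delta^2)}{r^2(1-r^4)},
\]
which is strictly negative precisely when $r^2<\delta$, i.e.\ $r<\sqrt{\delta}$.

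The key observation is that the interval $(\delta,\sqrt{\delta})$ is non-empty for every $\delta\in(0,1)$, so we may always choose a real $r$ with $\delta<r<\sqrt{\delta}$. With such a choice, $\lambda_1=r$ and $\lambda_2=-r$ lie in $R_\delta$, yet the associated $2\times 2$ matrix above is not positive semi-definite. Hence the restriction of the Szeg\H{o} kernel to $R_\delta$ fails the second condition of Definition \ref{dpkernel} and therefore is not a DP Szeg\H{o} kernel on $R_\delta$. The only step requiring any real care is the arithmetic simplification of $a-b$; the rest is bookkeeping.
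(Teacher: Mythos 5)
Your proof is correct, and it takes a genuinely different and more elementary route than the paper's. The paper argues indirectly: it invokes Example \ref{notPicksolvable} (a DP Pick problem on the two nodes $\pm\frac{\delta+\sqrt{\delta}}{2}$ that is solvable in $\mathcal{S}_{\mathrm{dp}}$ via $\frac12(z+\delta/z)$ but not solvable as a Pick problem on $\mathbb{D}$) together with the necessity direction of Theorem \ref{iff-dp-kernel-z}, so that if the Szeg\H{o} kernel were a DP Szeg\H{o} kernel the classical Pick matrix would have to be positive, a contradiction; that chain in turn rests on the dp-norm computation $\|z+\delta/z\|_{\mathrm{dp}}=2$ from Example \ref{dp-neq-infty}. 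You instead test the second positivity condition of Definition \ref{dpkernel} directly on the two-point set $\{r,-r\}$ and show by a clean $2\times 2$ computation that it fails whenever $\delta<r<\sqrt{\delta}$ (your simplification $a-b=\frac{2(r^4-\delta^2)}{r^2(1-r^4)}$ checks out, and the symmetric real matrix $\begin{pmatrix} a&b\\ b&a\end{pmatrix}$ with $a,b>0$ is indeed positive semi-definite iff $a\ge b$). It is worth noting that both arguments secretly live on the same configuration --- two antipodal points at modulus strictly between $\delta$ and $\sqrt{\delta}$ --- but yours is self-contained and exposes exactly which defining inequality breaks, whereas the paper's version ties the failure to a function-theoretic phenomenon (solvable DP data that are not solvable Schur data) and thereby illustrates the interpolation machinery it has just developed.
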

\begin{proof}
Suppose the kernel $[\frac{1}{1-\overline\mu \lambda}]$ restricted to $R_\delta$ is a DP kernel. Then, for any distinct $\lam_1,\dots,\lam_n \in R_\de$,
the localization of  $[\frac{1}{1-\overline\mu \lambda}]$ to $\{\lam_1,\dots,\lam_n \}$
belongs to $\mathcal{G}_{\mathrm {dp}} (\lambda)$.

Consider the $2$ distinct points  
 $\lam_1 = \frac{\de+\sqrt{\de}}{2}$, $\lam_2=-\lam_1$, note that, for $\delta \in (0,1)$, $\lam_1,\lam_2 \in R_\de$.
By Example \ref{dp-neq-infty}, the function $\phi(z)=\tfrac 12(z+\frac{\de}{z})$ on $R_\de$ satisfies $\|\phi\|_{\mathrm{dp}} =1$. Therefore,
for $\lambda_i$ and   $z_i= \phi(\lambda_i)= \frac {1}{2} \left(\lam_i+\frac{\de}{\lam_i} \right)$,
$ i=1,2$, the  DP Pick problem
\be \label{pick-ex-1}
\lambda_j \mapsto  z_j \quad \text{for} \ j=1,2,
\ee
is solvable. In Example \ref{notPicksolvable} we showed that, for $\delta \in (0,1)$, the corresponding Pick problem \eqref{pick-ex-1} is not solvable.

Since the problem \eqref{pick-ex-1} is a solvable DP Pick problem, 
by Theorem \ref{iff-dp-kernel-z}, for all 
$g \in \mathcal{G}_{\mathrm {dp}} (\lambda)$,
\be\label{212-5}
[(1-\overline z_i z_j)g_{ij}] \ge 0.
\ee
By the assumption, the localization of  $[\frac{1}{1-\overline\mu \lambda}]$ to $\{\lam_1,\lam_2 \}$ belongs to $\mathcal{G}_{\mathrm {dp}} (\lambda)$.
In particular, for the Pick problem
\be \label{pick-ex}
\lambda_j \mapsto  z_j \quad \text{for} \ j=1,2,
\ee
on $\d$, the Pick matrix 
\be\label{Pick-22}
\left[\frac{1-\overline z_i z_j}{1-\overline\lambda_i\lambda_j}\right]_{i,j=1}^2 \ge 0.
\ee
Hence, by Pick's theorem, the problem \eqref{pick-ex} is solvable by a Schur function $f$ on $\d$.
This contradicts our example, and so $[\frac{1}{1-\overline\mu \lambda}]$ is not a DP Szeg\H{o} kernel on $R_\delta$. \end{proof}

\section{Extremal DP Pick problems }\label{extremal}

In this section we study DP Pick interpolation problems that are ``only just" solvable.
 We say that a DP Pick problem is \emph{extremally solvable} if it is solvable and there does not exist $\phi \in \hinf_{\rm dp}$ with $\norm{\phi}_{\rm dp} <1$ satisfying the equations 
\be\label{200ex}
\phi(\lambda_j) = z_j \qquad \text{for} \ j=1,\ldots,n.
\ee

\begin{rem} \label{non-unique}
A DP Pick problem that is not extremally solvable cannot have a unique solution.  For suppose $\lam_j \mapsto z_j, j=1,\dots,n,$ is a solvable DP Pick problem that is not extremally solvable.  That means that there is a function $\ph:R_\de \to \c$ such that $\ph(\lam_j)=z_j$ for $j=1, \dots,n$ and $\|\ph\|_{\mathrm{dp}}<1$.
Consider the function $\psi(\lam)=\ph(\lam) + \eps \prod_{j=1}^n (\lam-\lam_j)$, for $\lam\in R_\de$, for some positive $\eps$.  Then $\psi(\lam_j)=z_j$ for $j=1, \dots,n$ and 
$$\|\psi\|_{\mathrm{dp}}\leq \|\ph\|_{\mathrm{dp}} + \eps\|\prod_{j=1}^n (\lam-\lam_j)\|_{\mathrm{dp}}<1$$ for all small enough $\eps$, and so there are infinitely many solutions to the interpolation problem $\lam_j \mapsto z_j$ for $j=1,\dots,n$ having DP norm less than $1$.
\end{rem}

Next we give necessary and sufficient conditions for a DP Pick problem to be extremally solvable.
\begin{thm} \label{DP-extremal}
Let  $\lam_1,\dots,\lam_n \in R_\de$ be distinct  and let $z_1,\dots,z_n\in\c$.
The following two statements are equivalent.
\begin{enumerate}[(i)]
\item The DP Pick problem  
\[
\lambda_j \mapsto  z_j \quad \text{for} \ j=1,\ldots,n
\]
is extremally solvable.
\item  For all  $ g \in \mathcal{G}_{\mathrm {dp}} (\lambda)$,
\be\label{215}
[(1-\overline z_i z_j)g_{ij}]_{i,j=1}^n \ge 0
\ee
and there exists $ \widetilde{g}\in \gdp(\lambda)$ such that
\be\label{500}
  \rank \big[(1-\overline z_i z_j)\widetilde{g}_{ij}\big]_{i,j=1}^n  < n.
\ee
\end{enumerate}
\end{thm}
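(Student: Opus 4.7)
The plan is to prove the equivalence in two directions, with Theorem \ref{iff-dp-kernel-z} supplying solvability on both sides and the extremality content coming from a compactness-plus-strict-inequality argument.

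For (ii) $\Rightarrow$ (i), condition \eqref{215} together with Theorem \ref{iff-dp-kernel-z} yields solvability, so I only need to establish extremality. Suppose for contradiction that there were $\phi \in \essdp$ with $\normdp{\phi} < 1$ interpolating the data. Take the $\tilde g \in \gdp(\lambda)$ from \eqref{500} and invoke Proposition \ref{g-to-T-DP} to produce a basis $e_1,\dots,e_n$ of an $n$-dimensional Hilbert space $\h$ whose Gramian is $\tilde g$ together with an operator $T \in \fdp(\delta,\lambda)$ satisfying $T e_j = \lambda_j e_j$. Then $\phi(T) e_j = z_j e_j$ and $\|\phi(T)\| \le \normdp{\phi} < 1$, so $I - \phi(T)^*\phi(T)$ is strictly positive on $\h$. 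Writing out the associated quadratic form in the basis $\{e_j\}$, for any $x = \sum_j x_j e_j$,
\[
\ip{(I - \phi(T)^* \phi(T)) x}{x} = \sum_{i,j=1}^n \bar{x_i} x_j (1 - \bar{z_i} z_j) \tilde g_{ij},
\]
which forces $[(1 - \bar{z_i} z_j) \tilde g_{ij}]$ to be positive definite, i.e.\ of rank $n$, contradicting \eqref{500}.

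For (i) $\Rightarrow$ (ii), solvability gives \eqref{215} via Theorem \ref{iff-dp-kernel-z}, and I must produce the degenerate kernel. Assume for contradiction that \eqref{500} fails, so that $[(1-\bar z_i z_j) g_{ij}]$ is strictly positive definite for every $g \in \gdp(\lambda)$. By Proposition \ref{dpcompact}, $\mathcal{G}_{\mathrm{dp}}^{\mathrm{norm}}(\lambda)$ is compact, and since the smallest eigenvalue of $[(1-\bar z_i z_j)g_{ij}]$ is a continuous strictly positive function of $g$ on this compact set, it attains a positive lower bound $\eps > 0$. Set also $M := \sup\{\|[\bar{z_i} z_j g_{ij}]\| : g \in \mathcal{G}_{\mathrm{dp}}^{\mathrm{norm}}(\lambda)\}$, which is finite; the degenerate case $M = 0$ forces all $z_j = 0$ and is incompatible with extremality since then $\phi \equiv 0$ is a solution of norm $0$. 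Hence for every normalized $g$,
\[
[(1 - \bar{z_i} z_j) g_{ij}] \ge \eps I \ge \frac{\eps}{M} [\bar{z_i} z_j g_{ij}].
\]
By the diagonal congruence recorded in Remark \ref{norm-kernel}, which preserves both sides of such an inequality up to conjugation by an invertible diagonal matrix, this extends to every $g \in \gdp(\lambda)$. Now choose $t \in (0,1)$ with $\frac{1}{(1-t)^2} - 1 \le \frac{\eps}{M}$ and set $\tilde z_j := z_j/(1-t)$; then for every $g \in \gdp(\lambda)$,
\[
[(1 - \bar{\tilde z_i}\tilde z_j) g_{ij}] = [(1 - \bar{z_i} z_j) g_{ij}] - \Bigl(\tfrac{1}{(1-t)^2} - 1\Bigr) [\bar{z_i} z_j g_{ij}] \ge 0.
\]
By Theorem \ref{iff-dp-kernel-z} there exists $\psi \in \essdp$ with $\psi(\lambda_j) = \tilde z_j$, and then $\phi := (1-t)\psi$ interpolates the original data with $\normdp{\phi} \le 1-t < 1$, contradicting (i).

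The main obstacle is the forward direction: one must convert the pointwise strict positivity of $[(1-\bar z_i z_j) g_{ij}]$ over the non-compact family $\gdp(\lambda)$ into a uniform inequality strong enough to dominate the rank-one tail $[\bar z_i z_j g_{ij}]$ and allow us to scale all $z_j$ slightly outward. Passing to the compact subfamily $\mathcal{G}_{\mathrm{dp}}^{\mathrm{norm}}(\lambda)$ of Proposition \ref{dpcompact} for the uniform bound, and then propagating that bound back to all of $\gdp(\lambda)$ by diagonal congruence, is the essential technical step.
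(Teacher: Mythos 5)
Your proposal is correct and follows essentially the same route as the paper: both directions hinge on Theorem \ref{iff-dp-kernel-z}, with (i)$\Rightarrow$(ii) obtained by using compactness of $\mathcal{G}_{\mathrm {dp}}^{\mathrm{norm}}(\lambda)$ to scale the target data outward by a factor greater than $1$ and then dividing the resulting interpolant, and (ii)$\Rightarrow$(i) resting on the decomposition $(1-\bar z_i z_j)\tilde g_{ij}=(1-r^2)\tilde g_{ij}+(r^2-\bar z_i z_j)\tilde g_{ij}$, which you phrase equivalently through the strict contraction $\phi(T)$ for $T\in\fdp(\delta,\lambda)$. The only real difference of execution is in the forward direction, where you extract uniformity via a smallest-eigenvalue bound $\eps$ and the domination $\eps I\ge \tfrac{\eps}{M}[\bar z_i z_j g_{ij}]$, yielding an explicit admissible scaling factor, whereas the paper uses leading principal minors together with an equicontinuity argument; both are sound, and your quantitative version is arguably the cleaner way to carry out that compactness step.
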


\begin{proof} (i) $\implies$ (ii).
Suppose that the DP Pick problem 
\[
\lam_j \mapsto z_j\ \text{for}\  j=1,\dots,n
\]
 is extremally solvable.  Since the problem is solvable, Theorem \ref{iff-dp-kernel-z} implies that,
 for all  $g \in \mathcal{G}_{\mathrm {dp}} (\lambda)$,
\be\label{212-2}
[(1-\overline z_i z_j)g_{ij}] \ge 0.
\ee

Suppose, for a contradiction, that there is no $g \in \mathcal{G}_{\mathrm {dp}} (\lambda)$ such that
\be\label{212-3}
[(1-\overline z_i z_j)g_{ij}]\ \text{is singular}.
\ee
Let $F:\r\times \mathcal{G}_{\mathrm {dp}}^{\mathrm{norm}} (\lambda) \to \r$ be defined by 
\begin{align*}
F(r,[g_{ij}]) &= \text{the minimum of the leading principal minors of} \bbm (1-r^2\overline z_i z_j) g_{ij}\ebm_{i,j=1}^n \notag\\
 &=\min_{J=1,\dots,n} \det \bbm (1-r^2\overline z_i z_j) g_{ij}\ebm_{i,j=1}^J.
\end{align*}
By standard linear algebra, for any positive definite matrix $g=[g_{ij}]$, $F(r,g)> 0$ if and only if 
$\bbm (1-r^2\overline z_i z_j) g_{ij}\ebm_{i,j=1}^n > 0$.
  $F$ is continuous and, by supposition,  $$ [(1-\overline z_i z_j)g_{ij}] > 0$$
 for all  $g \in \mathcal{G}_{\mathrm {dp}} (\lambda)$,
  which implies that $F(1,g) > 0$ for all $g \in \mathcal{G}_{\mathrm {dp}}^{\mathrm{norm}} (\lambda)$.  Since, by Proposition \ref{dpcompact}, $\mathcal{G}_{\mathrm {dp}}^{\mathrm{norm}} (\lambda)$ is compact, $F(1,\cdot)$ attains its minimum on $\mathcal{G}_{\mathrm {dp}}^{\mathrm{norm}} (\lambda)$, and so there exists $\kappa>0$ such that $F(1,g)\geq\kappa$ for all $g\in \mathcal{G}_{\mathrm {dp}}^{\mathrm{norm}} (\lambda)$.
  
  By the continuity of $F$ and, again by the compactness of $\mathcal{G}_{\mathrm {dp}}^{\mathrm{norm}}(\lambda)$, the family of functions $\{ F(\cdot,g):g \in \mathcal{G}_{\mathrm {dp}}^{\mathrm{norm}} (\lambda)\}$ is equicontinuous on $\r$.  Hence there exists $\de> 0$ such that $F(r,g) > 0$ for all $g\in \mathcal{G}_{\mathrm {dp}}^{\mathrm{norm}} (\lambda)$ and all $r\in (1,\de)$. Choose some $r\in (1,\de)$.  Then $F(r,g) > 0$ for all $g\in \mathcal{G}_{\mathrm {dp}}^{\mathrm{norm}} (\lambda)$, and therefore $\bbm (1-r^2\overline z_i z_j) g_{ij}\ebm_{i,j=1}^n > 0$ for all $g\in \mathcal{G}_{\mathrm {dp}}^{\mathrm{norm}} (\lambda)$.  It follows from Proposition \ref{dpcompact} that  $\bbm (1-r^2\overline z_i z_j) g_{ij}\ebm_{i,j=1}^n > 0$ for all $g\in\mathcal{G}_{\mathrm {dp}} (\lambda)$.  Hence, by Theorem \ref{iff-dp-kernel-z}, for the chosen   $r\in (1,\de)$, the DP Pick problem
\[
\lam_j \mapsto rz_j\ \text{for}\ j=1,\dots,n
\]
is solvable, which is to say that there exists a function $\psi\in\hol(R_\de)$ such that $\|\psi\|_{\mathrm{dp}} \leq 1$ and $\psi(\lam_j)=rz_j$ for $j=1,\dots,n$.  Thus the function $\ph\df\psi/r$ satisfies $\ph(\lam_j)=z_j$ for $j=1,\dots,n$ and   $\|\ph\|_{\mathrm{dp}} \leq 1/r < 1$, contrary to hypothesis.  Hence there is a $g \in \mathcal{G}_{\mathrm {dp}} (\lambda)$ such that
\be\label{212-4}
[(1-\overline z_i z_j)g_{ij}]\ \text{is singular}.
\ee
 We have shown that statements \eqref{215} and \eqref{500} hold, and so have established  (i) $\Rightarrow$ (ii) necessity in Theorem \ref{DP-extremal}.\\

 (ii) $\Rightarrow$ (i).
Suppose that (ii) holds, and so, for all  $ g \in \mathcal{G}_{\mathrm {dp}} (\lambda)$,
\be\label{215-2}
[(1-\overline z_i z_j)g_{ij}]_{i,j=1}^n \ge 0.
\ee
By Theorem \ref{iff-dp-kernel-z}, 
there exists $ \phi \in \essdp$ such that
\be \label{inter-extr}
\phi(\lambda_j) = z_j \qquad \text{for} \ j=1,\ldots,n. 
\ee

Suppose (i) does not hold, which means that the problem is non-extremally solvable, and hence there exists $\ph$ such that $\|\ph\|_{\mathrm{dp}} = r < 1$ and $\ph$ satisfies $\ph(\lam_j)=z_j$ for $j=1,\dots,n$.
Thus  for all  $ g \in \mathcal{G}_{\mathrm {dp}} (\lambda)$,
\be\label{r10}
[(r^2-\overline{z_i}z_j)g_{ij}]\ge 0.
\ee
By assumption (ii),  there exists $ \widetilde{g}\in \gdp(\lambda)$ such that
\be\label{500-2}
  \rank \big[(1-\overline z_i z_j)\widetilde{g}_{ij}\big]_{i,j=1}^n  < n.
\ee
and hence $[(1-\overline z_i z_j)\widetilde{g}_{ij}]_{i,j=1}^n$ has a non-zero null vector $v$.
Consider the relation
\[
(1-\overline z_i z_j) \widetilde g_{ij} = (1-r^2) \widetilde g_{ij} +(r^2-\overline z_i z_j) \widetilde g_{ij}.
\]
Since  $\widetilde g_{ij}>0$,  $(1-r^2) \widetilde g_{ij}>0$ and, by equation \eqref{r10}, $(r^2-\overline z_i z_j) \widetilde g_{ij} \geq 0$,
which is a contradiction. \end{proof}

By Theorem \ref{extr-finite}, if a DP Pick problem is solvable, then there exists
 a rational solution $\phi \in \essdp$.  In the next theorem we show that if, further, the problem is {\em extremally} solvable then there exists $T\in \fdp(\de,\lambda)$, acting on an $n$-dimensional Hilbert space, such that
$\norm{\phi(T)}=\norm{\phi}_{\rm dp} = 1.$

\begin{thm}  \label{finite-extr}
Let  $\lam_1,\dots,\lam_n \in R_\de$ be distinct  and let $z_1,\dots,z_n\in\c$.
If the DP Pick problem  
\[
\lambda_j \mapsto  z_j \quad \text{for} \ j=1,\ldots,n,
\]
is extremally solvable,  then there exists a rational function $\phi \in \essdp$ which satisfies the equations 
 \be\label{200ex-4}
\phi(\lambda_j) = z_j \qquad \text{for} \ j=1,\ldots,n,
\ee
and has a model $(\m,u)$  as in equation \eqref{120-2}, where  $u: R_\de \to \m$ is a holomorphic function and $\dim \m \leq 2n$. Furthermore, there exists $T\in \fdp(\de,\lambda)$ such that
\[
1=\norm{\phi}_{\rm dp} = \norm{\phi(T)}.
\]
In particular,
\[
1-\phi(T)^*\phi(T) = u(T)^*\big(1-E(T)^*E(T)\big)\ u(T),
\]
where $\m$ can be written as $\m=\m_1\oplus \m_2$ and $E:R_\delta \to \b(\m)$ is defined by the formula
\be\label{formE-2}
E(\lambda)=\begin{bmatrix}\lambda & 0\\ 0 & \frac{\delta}{\lambda}\end{bmatrix}\ \qquad\text{for}\  \lambda \in R_\delta
\ee
with respect to this orthogonal decomposition of $\m$.
\end{thm}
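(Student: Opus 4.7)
The plan is to invoke Theorem \ref{extr-finite} for the rational function and its model, to use Theorem \ref{DP-extremal} together with Proposition \ref{g-to-T-DP} to manufacture the norm-attaining operator $T$, and finally to lift the pointwise DP-model identity \eqref{120-2} to an operator identity on $\h_T$ by exploiting the diagonalizability of $T$.

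First, since extremal solvability implies solvability, Theorem \ref{extr-finite} will immediately supply a rational $\ph\in\essdp$ satisfying the interpolation conditions \eqref{200ex-4} together with a DP-model $(\m,u)$ as in \eqref{120-2}, where $\m=\m_1\oplus\m_2$, $u:R_\de\to\m$ is holomorphic and $\dim\m\le 2n$.  This disposes of the first two conclusions.

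Next, to produce the operator $T$, I would apply Theorem \ref{DP-extremal}: extremal solvability provides a kernel $\tilde g\in\gdp(\lam)$ for which $[(1-\bar z_iz_j)\tilde g_{ij}]$ has rank strictly less than $n$.  Using Moore's theorem I would realize $\tilde g$ as the gramian of a basis $e_1,\dots,e_n$ of an $n$-dimensional Hilbert space $\h_T$, and set $Te_j=\lam_j e_j$; Proposition \ref{g-to-T-DP} then guarantees $T\in\fdp(\de,\lam)$.  Repeating the quadratic-form computation from the proof of Proposition \ref{T-g-G(lam)} with $\ph(T)$ in place of $T$ would give, for any $x=\sum_j\xi_j e_j$, the identity $\|x\|^2-\|\ph(T)x\|^2=\sum_{i,j=1}^n \bar\xi_i\xi_j(1-\bar z_iz_j)\tilde g_{ij}$.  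The rank deficit of the matrix on the right would exhibit a unit vector $x$ at which both sides vanish, forcing $\|\ph(T)\|=1$; combined with $\|\ph\|_{\rm dp}\le 1$ this yields $1=\|\ph\|_{\rm dp}=\|\ph(T)\|$.

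Finally, I would upgrade the pointwise model formula to the asserted operator identity.  Since $\sigma(T)\subset R_\de$, the Riesz-Dunford calculus defines $u(T):\h_T\to\m\otimes\h_T$ componentwise, and $E(T)$ is interpreted as the block-diagonal ampliation $\mathrm{diag}(1_{\m_1}\otimes T,\,1_{\m_2}\otimes\de T\inv)$ on $\m\otimes\h_T$.  Because $T$ is diagonalized by $\{e_j\}$, one computes $u(T)e_j=u(\lam_j)\otimes e_j$ and $E(T)u(T)e_j=E(\lam_j)u(\lam_j)\otimes e_j$, so that expanding $\ip{(1-E(T)^*E(T))u(T)x}{u(T)x}_{\m\otimes\h_T}$ and inserting the pointwise model identity \eqref{120-2} at each pair $(\lam_i,\lam_j)$ will recover the sum $\sum_{i,j}\bar\xi_i\xi_j(1-\bar z_iz_j)\tilde g_{ij}=\ip{(1-\ph(T)^*\ph(T))x}{x}_{\h_T}$ produced in the previous step.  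Polarization then delivers the operator identity.  The main obstacle I anticipate is bookkeeping rather than conceptual: the several tensor-product ampliations of $u$, $E$, and $T$ must be reconciled on the common space $\m\otimes\h_T$, but the diagonal action of $T$ on its eigenbasis makes the verification essentially formal.
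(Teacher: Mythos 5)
Your proposal is correct and follows essentially the same route as the paper: Theorem \ref{extr-finite} for the rational interpolant and its finite-dimensional model, Theorem \ref{DP-extremal} plus Moore's theorem and Proposition \ref{g-to-T-DP} to build $T$ from a rank-deficient kernel $\tilde g$, the null-vector quadratic-form computation to force $\|\ph(T)\|=1$, and the hereditary functional calculus for the operator identity. The only cosmetic difference is that you verify the hereditary identity explicitly on the eigenbasis of $T$, where the paper simply cites the hereditary functional calculus.
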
 
\begin{proof}  Since the DP Pick problem $\lambda_j \mapsto  z_j, \ \text{for} \ j=1,\ldots,n,$ is solvable, by Theorem \ref{extr-finite}, there exists
 a rational function $\phi \in \essdp$ such that $\phi(\lambda_j) = z_j, \text{for} \ j=1,\ldots,n$. 
 
Let us now prove the existence of an operator $T$ with the stated properties. By assumption, the DP-Pick problem $\lam_j \mapsto z_j, j=1,\dots,n$ is {\em extremally} solvable. 
By Theorem \ref{DP-extremal}, there exists $\widetilde g \in \gdp(\lambda)$ such that
\be\label{500bis}
  \rank \big[(1-\overline z_i z_j)\widetilde{g}_{ij}\big]_{i,j=1}^n  < n,
\ee
so that $\bbm (1-\overline z_i z_j)\widetilde{g}_{ij} \ebm$ is singular, and therefore has a non-zero null vector $\xi=[\xi_1,\dots,\xi_n]^T \in\c^n$, which is to say that
\be\label{propxi}
\sum_{j=1}^n (1-\overline z_i z_j)\widetilde{g}_{ij} \xi_j = 0\ \text{for}\ i=1,\dots,n.
\ee
Since $\widetilde g \in  \gdp(\lambda)$, $[\widetilde g_{ij}] > 0$, and so $[\widetilde g_{ij}]$ has rank $n$. By Moore’s theorem's Theorem there exist an $n$-dimensional Hilbert space $\h$ and a basis $\widetilde e_1, \dots,\widetilde e_n \in \h$ such that $\widetilde g_{ij}=\ip{\widetilde e_j}{\widetilde e_i}$ for $i,j=1,\dots,n$.

Define an operator $T$ on $\h$ by $T\widetilde e_j = \lam_j\widetilde e_j$ for $j=1,\dots,n$.  Since $\widetilde g \in \gdp(\lambda)$, by Proposition \ref{g-to-T-DP},  $T\in \fdp(\delta,\lambda)$.  Note that $\ph(T)\widetilde e_j=z_j\widetilde e_j$, and so, if $x=\sum_{j=1}^n \xi_j\widetilde e_j$, then
\[
\ph(T)x = \sum_{j=1}^n z_j\xi_j\widetilde e_j
\]
\noindent and
\begin{align}\label{phTcontr}
\ip{(1-\ph(T)^*\ph(T))x}{x} &= \sum_{i,j=1}^n (1-\overline z_iz_j) \overline\xi_i \xi_j\ip{\widetilde e_j}{\widetilde e_i}\notag \\ 	  &=\sum_{i,j=1}^n (1-\overline z_iz_j) \overline\xi_i \xi_j\widetilde g_{ij} \notag\\
	&= \sum_{i=1}^n \overline \xi_i\sum_{j=1}^n (1-\overline z_iz_j) \xi_j\widetilde g_{ij} \notag\\
	&= 0.
\end{align}
As $\xi\neq 0$, the complex numbers $\xi_1,\dots,\xi_n$ are not all zero, and so, since $\widetilde e_1,\dots,\widetilde e_n$ are linearly independent, $x=\sum_{j=1}^n \xi_j\widetilde e_j \neq 0$.  Since  $\|\ph\|_{\mathrm{dp}} \leq 1$
and $T\in \fdp(\delta,\lambda)$, we have $\|\ph(T)\|\leq 1$, and so $1-\ph(T)^*\ph(T) \geq 0$.  In conjunction with the equality \eqref{phTcontr}, this implies that $(1-\ph(T)^*\ph(T))x=0$, and hence $\|\ph(T)x\|^2=\|x\|^2$.  Since $x\neq 0$, $x$ is a maximizing vector for $\ph(T)$ and $\|\ph(T)\|=1$.

By Theorem \ref{extr-finite}, for the rational function $\phi$ there exists a model $(\m, u),$ where  $u: R_\de \to \m$ is holomorphic, so that
\be\label{120-5}
1-\overline{\phi(\mu)}\phi(\lambda) = \Bigip{\big(1-E(\mu)^*E(\lambda)\big)\ u(\lambda)}{u(\mu)}_\m\ 
\text{for}\  \lambda,\mu \in R_\delta,
\ee
where  $\dim \m \leq 2n$.
Since $T\in \fdp(\delta,\lambda)$, $T$ 
satisfies
\[
\sigma(T)=\{\lambda_1,\ldots,\lambda_n\} \subset R_\delta.
\]
Thus, by the Riesz-Dunford functional calculus, $\ph(T)$ is well defined and, by the hereditary functional calculus,
\[
1-\phi(T)^*\phi(T) = u(T)^*\big(1-E(T)^*E(T)\big)\ u(T).
\qedhere
\]
\end{proof}

\section{Declarations}
{\bf NSF and EPSRC grants.}
Agler was partially supported by National Science Foundation Grants DMS 1361720 and 1665260.
Lykova and Young were partially supported by the Engineering and Physical Sciences grant EP/N03242X/1.\\

{\bf Conﬂict of interest.} The authors have no Conﬂict of interest to declare that are
relevant to the content of this article.

{\bf Data availability statement.} 
 No data were collected, generated or consulted in connection with this research.\\

\end{document}